\numberwithin{equation}{section}
\theoremstyle{plain}
\newtheorem{theorem}{Theorem}
\newtheorem{prop}[theorem]{Proposition}
\newtheorem{lemma}[theorem]{Lemma}
\newtheorem{definition}[theorem]{Definition}
\theoremstyle{remark}
\newtheorem*{remark}{Remark}
\def\uddots{\mathinner{\mkern1mu\raise\p@
\vbox{\kern7\p@\hbox{.}}\mkern2mu
\raise4\p@\hbox{.}\mkern2mu\raise7\p@\hbox{.}\mkern1mu}}
\mathchardef\hy="2D
\newcommand{\op}{\mathrm{op}}
\newcommand{\integer}{\mathbb{Z}}
\newcommand{\isomto}{\xrightarrow{\sim}}
\newcommand{\isom}{\cong}
\newcommand{\imply}{\Rightarrow}
\newcommand{\id}{\mathrm{id}}
\newcommand{\Ext}{\mathrm{Ext}}
\newcommand{\Hom}{\mathrm{Hom}}
\newcommand{\End}{\mathrm{End}}
\newcommand{\lmod}[1]{#1\mathrm{\hy mod}}
\newcommand{\gr}[1]{#1\mathrm{\hy gr}}
\newcommand{\catO}{\mathcal{O}}
\newcommand{\std}{\Delta}
\newcommand{\costd}{\nabla}
\newcommand{\projresstd}{\widetilde{\Delta}}
\newcommand{\injrescostd}{\widetilde{\nabla}}
\newcommand{\Rad}{\mathrm{Rad}}
\newcommand{\sn}{\mathfrak{S}}
\newcommand{\add}{\mathrm{add}}
\newcommand{\blambda}{{\boldsymbol{\lambda}}}
\newcommand{\bmu}{\boldsymbol{\mu}}
\newcommand{\tiltres}{\mathcal{T}}
\newcommand{\projresX}{\widetilde{X}}
\begin{document}
\title{Some Homological Properties of Tensor and Wreath Products of Quasi-hereditary Algebras}
\author{Aaron Chan}
\maketitle
\begin{abstract}
	We show that taking the wreath product of a quasi-hereditary algebra with symmetric group inherits several homological properties of the original algebra, namely BGG duality, standard Koszulity, balancedness as well as a condition which makes the Ext-algebra of its standard modules a Koszul algebra.
\end{abstract}

\section{Introduction}
Let $A$ be a unital finite dimensional quasi-hereditary $F$-algebra with respect to the weight poset $(I,\leq)$, where $F$ is an algebraically closed field.  $I$ is then the indexing set for the isomorphism classes of six families of modules, namely the standard modules $\{\std(i)\}$, costandard modules $\{\costd(i)\}$, tilting modules $\{T(i)\}$, projectives $\{P(i)\}$, injectives $\{Q(i)\}$ and simples $\{L(i)\}$.  We will call these six families of modules the \textit{structural families} or \textit{structural modules} of $A$.  We use the symbol $X$, for $X\in\{P,Q,L,T,\std,\costd\}$, to denote the direct sum of representatives of all the isomorphism classes of the corresponding special family, i.e. $X=\bigoplus_{i\in I}X(i)$.

We assume $A=\bigoplus_{n\in \integer_{\geq 0}}A_n$, i.e. it is graded by $\integer_{\geq 0}$.  Since we use $\integer$-grading throughout, for convenience ``graded" will always mean ``$\integer$-graded".  The category of locally finite dimensional graded modules is denoted $\gr{A}$, note that $\Hom_{\gr{A}}(M,N)$ consists of maps from $M$ to $N$ which are homogenous of degree 0.  For $M=\bigoplus_{n\in\integer}M_n\in\gr{A}$, we let $M\langle k\rangle$ denotes the grading shift such that $(M\langle k\rangle)_n = M_{k+n}$.

When $A$ is graded with $A_0\isom A/\Rad A$, the structural modules of $A$ have ``standard" graded lifts as follows.  As any simple $A$-module $L(i)$ can be identified with summands of $A_0$, the standard graded lift of $L(i)$ is concentrated in degree 0, i.e. $L(i) = L(i)_0$.  Recall the natural morphisms on the structural $A$-modules:
\begin{equation}\label{eqn-strucmap}
\xymatrix@R=10pt{
P(i) \ar@{->>}[r] & \std(i) \ar@{->>}[r]\ar@{^{(}->}[rd]& L(i)\ar@{^{(}->}[r]& \costd(i)\ar@{^{(}->}[r] & Q(i)\\
     &         & T(i) \ar@{->>}[ru]&           &
}
\end{equation}
The standard graded lifts of the structural $A$-module are chosen such that all the maps above live in $\gr{A}$.  Following the notion in the classical Koszul theory, a complex of structural modules $\mathcal{X}^\bullet$ given by
\[
\cdots \to \mathcal{X}^{n-1}\xrightarrow{d^{n-1}} \mathcal{X}^n \xrightarrow{d^n} \mathcal{X}^{n+1} \to\cdots
\]
is said to be \emph{linear} if all indecomposable summands of $\mathcal{X}^n$ are isomorphic to $X(i)\langle n\rangle$ for some $i\in I$.  \textcolor{white}{)))}

For $X\in\{P,Q,L,T,\std,\costd\}$, we denote by $A^X$ the Yoneda Ext-algebra $\Ext^\bullet_A(X,X)^\op$ of a structural family of $A$-modules.  Understanding the structure of the Ext-algebra $A^X$ for a given algebra $A$ is then a natural and interesting question to ask.  In the case of $X\in\{P,Q\}$, one gets the basic algebra associated to $A$.  If $X=T$, then one gets the \emph{Ringel dual} $A'$ of $A$, this is a quasi-hereditary algebra with respect to $(I,\leq^\op)$.  In these cases, we get a derived equivalence between $A$ and $A^X$ given by the $A\hy A^X$ (two-sided) tilting complex $X$.  However, for $X\in \{ L,\std,\costd\}$, the properties of $A^X$ are generally much more obscure.  One then has to restrict to subclasses of quasi-hereditary algebras which exhibit nice homological properties.

A quasi-hereditary algebra $A$ is \emph{standard Koszul} if there is a grading on $A$, so that for each $i\in I$, a minimal graded projective resolution $\projresstd(i)^\bullet$ of standard modules $\std(i)$ and a minimal graded injective coresolution $\injrescostd(i)^\bullet$ of $\costd(i)$ are both linear.  This notion was first established in \cite{ADL}, where they have proved that $A$ is then a Koszul algebra and the Koszul dual $A^! (=A^L)$ of $A$ is quasi-hereditary with respect to $(I,\leq^\op)$.  Also note that if $A$ is quasi-hereditary and Koszul, then the Koszul grading will satisfy the assumption for standard Koszulity.  By \cite{BGS}, one then gets a derived equivalence between $A$ and $A^!$; also see \cite{Mad1} on unifying such derived equivalence with the one arising from Ringel duality.

Also recall from \cite{Irv} that $A$ is a \emph{BGG algebra} if it is quasi-hereditary and there is a duality functor on the category of finitely generated modules $\lmod{A}$, i.e. a contravariant exact functor $\delta$ on $\lmod{A}$ such that $\delta^2 \isom \id_{\lmod{A}}$, and for all $i\in I$, we have $\delta L(i)\isom L(i)$.  When $A$ is a BGG algebra, $\delta P(i)\isom Q(i)$ and $\delta \std(i)\isom \costd(i)$, see \cite{Irv}.  In particular, linearity on the resolutions of standard modules will suffice to show standard Koszulity.

Following \cite{Maz2}, $A$ is said to be \emph{balanced} if $A$ is standard Koszul, and for each $i\in I$, a minimal graded tilting coresolution $\tiltres_\std(i)^\bullet$ of the standard module $\std(i)$ and a minimal graded tilting resolultion $\tiltres_\costd(i)^\bullet$ of the costandard module $\costd(i)$ are both linear.  In this case, the Ringel dual of $A$ is also Koszul, and $(A^T)^L \isom (A^L)^T$, see \cite{Maz2}.

Much less is known about $A^\std$ and $A^\costd$ in general.  The first property one can say is that they are quasi-hereditary with respect to both $(I,\leq)$ and $(I,\leq^\op)$ since they are directed algebras.  It is then desirable to ask for a derived equivalence between $A$ and $A^\std$ as the homological algebra for a directed algebra is usually relatively easier to understand.  Madsen approached this problem using generalised Koszul duality \cite{Mad2}, and we will come to this soon.  Another natural problem one would like to know is that when will $A^\std$ be Koszul.

Drozd-Mazorchuk showed that if a graded quasi-hereditary algebra $A$ is equipped with a function $h:I\to \{0,1,\ldots,n\}$, where $n$ is a natural number, and satisfies the following four conditions,
\vspace{-5mm}\begin{enumerate}[(I)]
\item $\tiltres_\std(i)^k\in \add\left(\oplus_{j: h(j)=h(i)-k} T(j)\langle k\rangle\right)$ for all $k\geq 0$;
\item $\tiltres_\costd(i)^k\in \add\left(\oplus_{j: h(j)=h(i)+k} T(j)\langle k\rangle\right)$ for all $k\leq 0$;
\item $\projresstd(i)^k\in \add\left(\oplus_{j: h(j)=h(i)-k} P(j)\langle k\rangle\right)$ for all $k\leq 0$;
\item $\injrescostd(i)^k\in \add\left(\oplus_{j: h(j)=h(i)+k} Q(j)\langle k\rangle\right)$ for all $k\geq 0$.
\end{enumerate}
then $A^\std$ is Koszul.  More explicitly,

\begin{theorem}[Drozd-Mazorchuk \cite{DM}]
Let $A$ be a quasi-hereditary algebra equipped with a function satisfying conditions (I)-(IV).  Then $A^\std$ is Koszul, with Koszul dual the Ext-algebra of costandard $A$-modules, i.e. $(A^\std)^! \isom (A^\costd)^\op$.
\end{theorem}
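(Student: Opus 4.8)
The plan is to turn both $A^\std$ and $A^\costd$ into honestly (singly) graded algebras with semisimple degree-zero part, and then to read off Koszulity together with the duality from a single Koszul-duality functor, deferring the one hard input---a derived equivalence---to generalised Koszul duality. For the \emph{collapse of the bigrading}: since each $\std(i)$ has top $L(i)$ concentrated in internal degree $0$, a homogeneous degree-$0$ map $\std(i)\to\std(j)$ is forced to be scalar, so $\Hom_{\gr A}(\std(i),\std(j))=\delta_{ij}F$ and $(A^\std)_0\isom\prod_{i\in I}F$ is semisimple. More importantly, condition (III) says exactly that the minimal projective resolution $\projresstd(i)^\bullet$ is linear: its degree-$m$ generators lie in internal degree $m$. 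Feeding this into $\Ext^m_A(\std(i),\std(j))=H^m\Hom_{\gr A}(\projresstd(i)^\bullet,\std(j))$ shows each $\Ext^m$ is pure of internal degree $m$, so the cohomological and internal gradings on $A^\std$ coincide and $A^\std$ becomes a nonnegatively graded algebra with semisimple degree $0$. The mirror computation from condition (IV) and $\injrescostd(i)^\bullet$ does the same for $A^\costd$. After this reduction, Koszulity of $A^\std$ is equivalent to $\Ext^\bullet_{A^\std}(S,S)$ being concentrated on the diagonal, where $S$ is the sum of the simple $A^\std$-modules; note that the target $\Ext^\bullet_A(\costd,\costd)$ of the asserted duality is diagonal by the same argument.

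I would then study the duality functor $\Phi=R\Hom_A(\std,-)\colon D^b(\gr A)\to D^b(\gr{A^\std})$ and pin down its action on the structural modules. First, $\Phi\std(j)$ has cohomology $\Ext^\bullet_A(\std,\std(j))=A^\std e_j$, the indecomposable graded projective, so $\std(j)$ maps to a projective. Second, the $\Ext$-orthogonality of standard and costandard modules, $\Ext^{>0}_A(\std,\costd)=0$ with $\Hom_A(\std(i),\costd(j))=\delta_{ij}F$, forces $\Phi\costd(j)\isom S_j$, the simple module concentrated in bidegree $0$. Thus, once $\Phi$ is known to preserve $\Ext$ between the images of the costandard modules, we obtain
\[
\Ext^\bullet_{A^\std}(S,S)\isom\Ext^\bullet_A(\costd,\costd),
\]
which, unwinding the $\op$'s in the definitions of $A^\std$, $A^\costd$ and the Koszul dual, is precisely $(A^\std)^!\isom(A^\costd)^\op$; and since the right-hand side is diagonal, the very same isomorphism proves that $A^\std$ is Koszul.

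Everything therefore reduces to showing that $\Phi$ restricts to a graded equivalence onto a generating subcategory of $D^b(\gr{A^\std})$, i.e. that it is an instance of generalised Koszul duality in the sense of \cite{Mad2}; this is where the tilting conditions (I) and (II) are spent. Concretely, the tilting module $T$ is the natural object on which one must check $\Phi$ is fully faithful and whose image one must show generates, and the linearity of the tilting (co)resolutions $\tiltres_\std(i)^\bullet$ and $\tiltres_\costd(i)^\bullet$ is exactly the grading bookkeeping needed to see that the comparison complexes are linear, equivalently that the relevant spectral sequence---of the double complex built from a tilting resolution of $\costd(i)$ together with the standard filtrations of its terms---degenerates. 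I expect this degeneration, the statement that the $h$-function forces every differential to move the single grading in a way leaving no off-diagonal $\Ext$, to be the genuine technical heart of the argument, with all four conditions (I)--(IV) used jointly; the formal reductions of the first two paragraphs are by comparison routine.
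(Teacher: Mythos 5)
The paper does not actually prove this theorem --- it is quoted as background from \cite{DM} --- so there is no internal proof to compare against; your sketch has to stand on its own, and it contains a genuine error at the very first step. You claim that condition (III) (linearity of $\projresstd(i)^\bullet$) forces each $\Ext^m_A(\std(i),\std(j))$ to be pure of internal degree $m$, so that the homological and internal gradings on $A^\std$ coincide and $(A^\std)_0\isom\prod_{i\in I}F$. This is false already for $m=0$: your own (correct) computation shows only that the internal--degree--zero part of $\Hom_A(\std(i),\std(j))$ vanishes for $i\neq j$, but $\Hom_A(\std(i),\std(j))$ itself is typically nonzero in positive internal degree --- for instance the inclusions of Verma modules in a block of category $\catO$, which is precisely the motivating example satisfying (I)--(IV) in \cite{DM}. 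So $\Ext^0$ is not pure of internal degree $0$, the two gradings do not collapse, and the degree-zero part of $A^\std$ in the homological grading is not semisimple. More generally, computing $H^m\Hom_{\gr{A}}(\projresstd(i)^\bullet,\std(j))$ spreads over many internal degrees because $\std(j)$ is not concentrated in degree $0$; linearity of the resolution alone buys you nothing here.

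This is not a repairable bookkeeping slip but the central difficulty of the theorem: the grading for which $A^\std$ is Koszul is a \emph{third} grading, neither homological nor internal --- the grading by $h(i)-h(j)$, i.e.\ Madsen's $\std$-grading discussed in the paper's introduction --- and showing that $\Ext^m_A(\std(i),\std(j)\langle t\rangle)\neq 0$ locks $m$, $t$ and $h(i)-h(j)$ together is exactly where conditions (I)--(IV) are spent. Your second and third paragraphs are on the right track and essentially reproduce the \cite{Mad2} strategy that the paper endorses ($\Phi=R\Hom_A(\std,-)$ sends $\std(j)$ to a projective and $\costd(j)$ to a simple, whence $\Ext^\bullet_{A^\std}(S,S)\isom\Ext^\bullet_A(\costd,\costd)$ and the duality statement), but because you have misidentified the Koszul grading, the criterion ``$\Ext^\bullet_{A^\std}(S,S)$ is concentrated on the diagonal'' is being checked against the wrong diagonal, and the argument as written does not establish Koszulity.
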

In fact, the original theorem contains more information, but we will omit this for now, since the results of Madsen \cite{Mad1, Mad2} also recover the same information.  Madsen's works use the theory of $T$-Koszulity, which was first introduced in \cite{GRS}, and combining with inspiration from \cite{DM}, to unify the theory of $A^\std$ (or $A^\costd$) with that of $A^X$ for $X\in\{P,Q,T,L\}$ when $A$ satisfies the following condition:
\begin{definition}
Let $(A,(I,\leq))$ be a standard Koszul BGG algebra.  Then we say $A$ \emph{satisfies condition (H)} when $A$ is equipped with a function $h:I\to \{0,1,\ldots,n\}$ such that, if the $k$-th radical layer of $\std(x)$ contains $L(y)$, then $h(y)=h(x)-k$.  That is to say:
\[
[\std(x) : L(y)\langle l\rangle]_q \neq 0 \;\;\imply\;\; h(y)=h(x)-l
\]
Note the multiplicity bracket and modules here are graded by the Koszul grading.
\end{definition}

It was originally proved in \cite{DM} that if the algebra associated to a block of category $\catO$ of a complex semisimple Lie algebra satisfies (H), then the block satisfies condition (H), which gives a function satisfying conditions (I)-(IV).  In \cite{Mad2}, Madsen relaxes this  to standard Koszul BGG algebras (rather than just specific blocks of category $\catO$); one can then able to reproduce most of the results in \cite{DM} through the use of $T$-Koszulity.  Moreover, one can now get a derived equivalence of graded $A$ and $A^\std$-modules, with a grading different from the Koszul grading and homological grading, termed as $\std$-grading by Madsen.  This grading has actually been seen ``in disguise" in \cite{DM}, and also has appeared in other investigations of $A^\std$ such as \cite{MT}.

\begin{theorem}[Madsen, \cite{Mad1},\cite{Mad2}]
Let $A$ be a standard Koszul BGG algebra satisfying (H).  Then \vspace{-5mm}
\begin{enumerate}
\item $A$ satisfies conditions (I)-(IV), hence $A$ is balanced and $A^\std$ is Koszul.
\item There is a $\std$-grading on $A$, i.e. $A$ is positively graded with $A_0 = \std$ and $A_i \in \add(\std)\langle i\rangle$ for all $i>0$.  Note the shift $\langle i\rangle$ here is on the Koszul grading.
\item Taking $T=A_0$ in the $\std$-grading, then $T$ satisfies the axioms of $T$-Koszulity.  In particular
\begin{enumerate} 
\item $D\std:= \Hom_F(\std,F)$ is an $A^\std$-module, and $A \isom  (A^\std)^{D\std}$
\item There is a graded derived equivalence:
\[
D^b( \gr{A}) \xrightarrow{\stackrel{R\Hom_A(\std,-)}{\sim}} D^b(\gr{A^\std})
\]
which sends costandard $A$-modules to simple $A^\std$-modules.
\end{enumerate}
\end{enumerate}
\end{theorem}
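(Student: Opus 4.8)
The plan is to prove the three parts in sequence, deriving conditions (I)--(IV) first and then bootstrapping the $\std$-grading and the $T$-Koszul duality from them. The starting observation for Part (1) is that standard Koszulity already fixes the \emph{shape} of the projective and injective (co)resolutions: linearity forces each term $\projresstd(i)^k$ to be a direct sum of copies of $P(j)\langle k\rangle$, whose multiplicity is $\dim\Ext^{-k}_A(\std(i),L(j))$ computed in the Koszul grading, and dually for $\injrescostd(i)^\bullet$. Hence the only content of condition (III) beyond standard Koszulity is the weight constraint $h(j)=h(i)-k$ on the indices $j$ that actually occur.

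To extract this constraint I would transport (H) across Koszul duality. The terms of $\projresstd(i)^\bullet$ are governed by the groups $\Ext^\bullet_A(\std(i),L(j))$, and under the duality $A\leftrightarrow A^!(=A^L)$ the graded module $\Ext^\bullet_A(\std(i),L)$ assembles into a (co)standard $A^!$-module by \cite{ADL}. Condition (H), which prescribes exactly how $h$ and the Koszul degree are locked together along the composition factors of the $A$-standard modules, then translates under this duality into the required constraint $h(j)=h(i)-k$ on the simples $L(j)$ appearing in homological degree $-k$; this establishes (III). Applying the BGG duality $\delta$, which fixes each $L(j)$ and exchanges $\std\leftrightarrow\costd$ and $P\leftrightarrow Q$, converts (III) into (IV). Conditions (I) and (II), concerning the tilting (co)resolutions $\tiltres_\std(i)^\bullet$ and $\tiltres_\costd(i)^\bullet$, I would deduce from (III) and (IV) via Ringel duality: a tilting module is precisely one carrying both a standard and a costandard filtration, and the minimal tilting coresolution of $\std(i)$ corresponds, under $\Hom_A(T,-)$, to a minimal projective resolution over the Ringel dual $A'$. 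Transporting (III) through this correspondence while tracking the Koszul shift and the $h$-degree simultaneously yields the linear, $h$-graded form of (I), and dually (II). With (I)--(IV) in hand, $A$ is balanced because (I) and (II) are exactly the linearity of the tilting (co)resolutions, and $A^\std$ is Koszul by the Drozd--Mazorchuk theorem stated above.

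For Part (2), I would construct the $\std$-grading by regrading $A$ according to $h$: the $(i,j)$-component of $A$ is placed in $\std$-degree $h(i)-h(j)$. Conditions (I)--(IV) guarantee that this assignment is well defined and makes $A$ positively graded. To identify the degree-zero part as $\std$ and the higher parts as lying in $\add(\std)\langle i\rangle$, I would use condition (III), which controls layer by layer in $h$ how the indecomposable projectives are built from standard sections: the $h$-degree-zero slice is then precisely a direct sum of standard modules, and the $h$-degree-$i$ slice lies in $\add(\std)\langle i\rangle$ once the Koszul shift is accounted for.

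For Part (3), I would verify the axioms of $T$-Koszulity of \cite{GRS} for $T=A_0=\std$ in the $\std$-grading: self-orthogonality $\Ext^{>0}_A(\std,\std)=0$ in the relevant grading (a consequence of $A$ being quasi-hereditary and directed), the requirement from Part (2) that $\add(\std)$ be the degree-zero part of a positively graded algebra, and that $\std$ generate the appropriate subcategory. The general $T$-Koszul duality machinery then produces the graded derived equivalence $R\Hom_A(\std,-)\colon D^b(\gr{A})\to D^b(\gr{A^\std})$ together with the identification $A\isom(A^\std)^{D\std}$, which is (3a). For (3b) I would compute $R\Hom_A(\std,\costd(i))$: since $\Ext^{>0}_A(\std,\costd)=0$ for any quasi-hereditary algebra, this complex is concentrated in cohomological degree $0$ and equals $\Hom_A(\std,\costd(i))$, which is one-dimensional; hence each $\costd(i)$ is sent to a simple $A^\std$-module. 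I expect the main obstacle to be the second step of Part (1): pinning down the exact weight constraint $h(j)=h(i)-k$ from (H), since this demands careful bookkeeping of the two independent gradings (Koszul degree and $h$-degree) as one passes between composition factors of $A$-standard modules, the linear Ext groups, and their images under Koszul and BGG duality.
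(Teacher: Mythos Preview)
The paper does not prove this theorem. It is stated, with attribution to Madsen and citations to \cite{Mad1,Mad2}, purely as background to motivate the paper's own results on tensor and wreath products; no argument is given in the paper itself. So there is no ``paper's own proof'' to compare your proposal against --- you are attempting to reconstruct Madsen's arguments, and the appropriate comparison would be with \cite{Mad1,Mad2} directly, not with the present paper.

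That said, one point in your sketch is genuinely wrong and would need to be repaired. In Part (3) you claim ``self-orthogonality $\Ext^{>0}_A(\std,\std)=0$ \ldots\ (a consequence of $A$ being quasi-hereditary and directed)''. But $A$ is not directed; it is $A^\std$ that is directed, precisely because it is the Ext-algebra of the standards. For a general quasi-hereditary algebra the standards are \emph{not} self-orthogonal: indeed $A^\std=\Ext^\bullet_A(\std,\std)^{\op}$ would collapse to $\End_A(\std)^{\op}$ if they were, and the whole discussion of Koszulity of $A^\std$ would be vacuous. Madsen's $T$-Koszul framework in \cite{Mad1} does not require $T$ to be self-orthogonal in the classical tilting sense; the axioms you need to verify are different (a Wakamatsu-type condition together with a graded linearity condition on $A$ relative to $A_0$), and your justification does not address them. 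Your derivation of (III) from (H) via Koszul duality is also only a sketch: you would need to check that (H) on $A$ really does transport to the required statement about composition factors of $A^!$-costandards, which is not automatic. The remaining steps (BGG duality for (IV), Ringel duality for (I)--(II), the computation $R\Hom_A(\std,\costd(i))\cong\Hom_A(\std,\costd(i))$) are reasonable in outline.
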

(3)(b) above now gives a rigorous meaning to the idea of Drozd-Mazorchuk (originated from Ovsienko) that costandard $A$-modules can be ``aligned" in such a way that they are simple $A^\std$-modules, inducing Koszulity of $A^\std$.

So far, we have introduced different subclasses of quasi-hereditary algebras which have nice homological properties, and these nice properties give information on how the Ext-algebras of the structural families behave.  In this article, we show that these nice properties can be carried over to tensor products of such algebras, as well as wreath products of such algebras with the symmetric group.

Throughout the article, any tensor product of vector spaces $\otimes$ without a subscript is the tensor product over the underlying field $F$.  Given an algebra $A$ and $w\in\integer_{>0}$, there is a natural action of the symmetric group $\sn_w$ on the tensor product $A^{\otimes w}$ by permuting components.  The wreath product of $A$ with the symmetric group $\sn_w$ is the vector space $A^{[w]} := A^{\otimes w}\otimes F\sn_w$, with multiplication
\[
(a_1\otimes\cdots\otimes a_w\otimes\sigma)(b_1\otimes \cdots\otimes \tau)=a_1b_{\sigma^{-1}(1)} \otimes\cdots \otimes a_wb_{\sigma^{-1}(w)}\otimes \sigma\tau
\]
for $\sigma,\tau\in\sn_n$.  We will simply call such an algebra the wreath product algebra or wreath product of $A$.

In the representation theory of symmetric groups and their quasi-hereditary covers (Schur algebras) over prime characteristic, the ``complexity" of blocks are measured by weights $w\in\integer_{>0}$ (not to be confused with the notion of weights in highest weight theory).  The weight zero blocks are the semisimple blocks and the weight one blocks are the Brauer tree algebras, and their quasi-hereditary covers, the zigzag algebras.  These algebras have been thoroughly studied throughout the literature.  For each given weight $w$, with $w>$char$F$, there is a special kind of block, called the Rouquier block or RoCK block, which is the simplest block to understand in terms of its homological behaviour.  The reason for this is because the RoCK block of weight $w$ is Morita equivalent to the wreath product of the weight one block with $\sn_w$; similar situations also occur in other areas of ``type A representation theory", see for example \cite{CT2}.  This particular example is our motivation to show that wreath product algebras inherits nice homological properties of the original algebra.  Since we will need results from \cite{CT} in our exposition, we will impose the extra condition that $w!$ is invertible in the field $F$ when we study wreath product algebras in Section 3.

The rest of this article consists of two sections.  The first surveys some results on tensor products of quasi-hereditary algebras with nice homological properties (BGG and/or standard Koszul and/or balanced and/or condition (H)).  We will also show that taking the Ext-algebra of a structural families over the tensor product of algebras is the same as taking the tensor products of the Ext-algebras of the structural families.  Most of these results are folklore, but we have yet to find good enough references for them, we will thus include some simplistic proof for each of them.  In the second section, we show the analogous results for wreath product algebras, using roughly the same ideas from the proofs in Section 2.

\section{Tensor product algebras}
Let $A_1, A_2$ be quasi-hereditary algebras and $(I_1,\leq_1), (I_2,\leq_2)$ be the respective weight posets.  The tensor product algebra $A:=A_1\otimes A_2$ is then quasi-hereditary with respect to $(I:=I_1\times I_2,\leq)$ where the partial order $\leq$ is defined by: $
(x_1,x_2) \leq (y_1,y_2)$ if $x_k \leq y_k$ for $k=1,2$.  This comes from the fact that each structural $A$-module is the tensor product of structural modules of $A_1$ and $A_2$, namely $X(x_1,x_2)=X_{A_1}(x_1)\otimes X_{A_2}(x_2)$ for $X\in\{P,Q,L,\std,\costd,T\}$ and all $(x_1,x_2)\in I$.  For simplicity, we denote structural $A_i$-module $X_{A_i}(x)$ by $X_i(x)$ for $i=1,2$.

\begin{prop}\label{prop-tenExtCommute}
Tensoring and taking the Ext-algebra of structural modules are commuting operations on algebras. i.e. $(A_1\otimes A_2)^X \isom A_1^{X_1}\otimes A_2^{X_2}$
\end{prop}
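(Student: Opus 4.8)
The plan is to reduce the statement to the K\"unneth formula for $\Ext$-groups over a field and then to check that the resulting isomorphism respects the Yoneda product. First observe that, since $X = \bigoplus_{i\in I} X(i)$ and $X(x_1,x_2) = X_1(x_1)\otimes X_2(x_2)$, distributing the tensor product over the direct sum gives $X \isom X_1\otimes X_2$ as $A$-modules, where on the right $X_i = \bigoplus_{x} X_i(x)$ is the corresponding structural module of $A_i$. Thus it suffices to compute $\Ext^\bullet_{A_1\otimes A_2}(X_1\otimes X_2,\, X_1\otimes X_2)$ and identify it, as a graded algebra, with $\Ext^\bullet_{A_1}(X_1,X_1)\otimes \Ext^\bullet_{A_2}(X_2,X_2)$; passing to opposite algebras then yields the claim.

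For the computation I would fix minimal graded projective resolutions $P_i^\bullet \onto X_i$ over $A_i$ for $i=1,2$. Because $F$ is a field, the total complex $P_1^\bullet\otimes P_2^\bullet$ carries no higher $\Tor$ terms and is therefore a projective resolution of $X_1\otimes X_2$ over $A=A_1\otimes A_2$; here one uses that the tensor product of a projective $A_1$-module with a projective $A_2$-module is a projective $A$-module (the indecomposable projectives of $A$ being exactly the $P_1(x_1)\otimes P_2(x_2)$). Applying $\Hom_A(-,\, X_1\otimes X_2)$ and using the natural isomorphism
\[
\Hom_{A_1\otimes A_2}(M_1\otimes M_2,\, N_1\otimes N_2)\isom \Hom_{A_1}(M_1,N_1)\otimes\Hom_{A_2}(M_2,N_2),
\]
valid for finite-dimensional modules, identifies the $\Hom$-complex computing the left-hand $\Ext$ with the tensor product of the two complexes $\Hom_{A_i}(P_i^\bullet, X_i)$ that compute $\Ext^\bullet_{A_i}(X_i,X_i)$.

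Taking cohomology and invoking the algebraic K\"unneth theorem over the field $F$ (again with no $\Tor$ correction) then produces a graded-vector-space isomorphism $\Ext^\bullet_A(X,X)\isom \Ext^\bullet_{A_1}(X_1,X_1)\otimes\Ext^\bullet_{A_2}(X_2,X_2)$. It remains to promote this to an isomorphism of algebras. The Yoneda product is computed by splicing of resolutions, and under the factorisation $P_1^\bullet\otimes P_2^\bullet$ this composition factors componentwise, so the K\"unneth map is multiplicative once one equips the target with the graded, Koszul-signed tensor product of the two $\Ext$-algebras. Passing to the opposite algebras, which the definition $A^X = \Ext^\bullet_A(X,X)^{\op}$ builds in, reconciles the sign conventions and gives $(A_1\otimes A_2)^X \isom A_1^{X_1}\otimes A_2^{X_2}$.

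The routine part is the underlying vector-space K\"unneth isomorphism, which is automatic over a field. The main obstacle I anticipate is the multiplicative bookkeeping: verifying that the K\"unneth comparison map intertwines the two Yoneda products, and in particular tracking the Koszul signs inherent in the graded tensor product of $\Ext$-algebras, so as to confirm that the passage to opposite algebras absorbs these signs correctly rather than obstructing the isomorphism.
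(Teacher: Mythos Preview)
Your proposal is correct and follows the same route as the paper: both reduce to the observation $X\isom X_1\otimes X_2$ and then invoke the K\"unneth-type isomorphism $\Ext^\bullet_{A_1\otimes A_2}(X_1\otimes X_2,X_1\otimes X_2)\isom \Ext^\bullet_{A_1}(X_1,X_1)\otimes\Ext^\bullet_{A_2}(X_2,X_2)$ as algebras, the only difference being that the paper simply cites this as folklore (referring to \cite{BO}) while you sketch its proof. One caution: your closing remark that ``passing to the opposite algebras\ldots reconciles the sign conventions'' is not accurate---taking opposites does nothing to Koszul signs---so the tensor product on the right must be read as the graded (Koszul-signed) tensor product, a point neither you nor the paper makes explicit.
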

\begin{proof}
Since $X_A\isom X_1\otimes X_2$ for $X\in\{P,Q,L,\std,\costd,T\}$.  It then follows from a well-known folklore that the Ext-algebra of the tensor product of modules is isomorphic (as an algebra) to the tensor product of Ext-algebras.  The closest reference we can find is the generalisation of this result in \cite[Theorem 3.7]{BO}.
\end{proof}

We first show that BGG duality can be induced naturally:
\begin{lemma}\label{lem-dualOnTensor}
If $A_1, A_2$ are BGG algebras, then so is $A_1\otimes A_2$.
\end{lemma}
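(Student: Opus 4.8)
The plan is to leave quasi-hereditarity aside---since it is recorded in the paragraph preceding the lemma, together with the identification $L(x_1,x_2)\isom L_1(x_1)\otimes L_2(x_2)$---and to concentrate on producing the duality functor $\delta$ on $\lmod{A_1\otimes A_2}$. The cleanest route is to realise each given duality $\delta_i$ on $\lmod{A_i}$ by an algebra anti-automorphism. For a finite-dimensional algebra, an exact $F$-linear contravariant self-equivalence that fixes every simple is (after passing to the basic algebra, which changes neither the quasi-hereditary nor the BGG data) naturally isomorphic to $\Hom_F(-,F)$ followed by restriction of scalars along an anti-automorphism $\sigma_i$ of $A_i$; see \cite{Irv}. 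I would record this and write $\delta_i(M)\isom \Hom_F(M,F)^{\sigma_i}$, where $(-)^{\sigma_i}$ denotes restriction along $\sigma_i\colon A_i\to A_i^{\op}$, i.e. twisting the $A_i$-action through $\sigma_i$.

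Next I would assemble the candidate on the tensor product. Using $(A_1\otimes A_2)^{\op}\isom A_1^{\op}\otimes A_2^{\op}$, the map $\sigma:=\sigma_1\otimes\sigma_2$ is an anti-automorphism of $A:=A_1\otimes A_2$, and I set $\delta(M):=\Hom_F(M,F)^{\sigma}$ for every $M\in\lmod{A}$. Being the $F$-dual followed by a restriction functor, $\delta$ is at once $F$-linear, contravariant and exact. To see that it fixes simples, note that $\Hom_F(-,F)$ and the twist along $\sigma=\sigma_1\otimes\sigma_2$ both factor through the tensor decomposition, so on the simple $L(x_1,x_2)=L_1(x_1)\otimes L_2(x_2)$ one gets
\[
\delta L(x_1,x_2)\isom \delta_1 L_1(x_1)\otimes \delta_2 L_2(x_2)\isom L_1(x_1)\otimes L_2(x_2)=L(x_1,x_2),
\]
invoking $\delta_i L_i(x)\isom L_i(x)$. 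This is the step that genuinely uses the BGG hypothesis on each factor.

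It remains to check $\delta^2\isom\id_{\lmod{A}}$, which I expect to be the main obstacle, because the double dual must be controlled on all of $\lmod{A}$ and not merely on tensor-product modules. Here I would argue through inner automorphisms: $\delta^2$ is naturally the identity on underlying vector spaces via the canonical double-duality isomorphism of finite-dimensional spaces, with $A$-action twisted by $\sigma^2=\sigma_1^2\otimes\sigma_2^2$. The hypothesis $\delta_i^2\isom\id$ forces twisting by $\sigma_i^2$ to be naturally isomorphic to the identity on $\lmod{A_i}$, which is exactly the statement that $\sigma_i^2=\mathrm{conj}_{u_i}$ is inner for some unit $u_i\in A_i$. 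Then $\sigma^2=\mathrm{conj}_{u_1\otimes u_2}$ is inner in $A$, so twisting by $\sigma^2$ is naturally isomorphic to the identity and $\delta^2\isom\id_{\lmod{A}}$ follows. The only points demanding care are the reduction of a simple-fixing duality to an anti-automorphism and the ``inner $\Rightarrow$ trivial twist'' dictionary; once these are in place, the tensor compatibility used above is routine.
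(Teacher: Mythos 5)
Your proposal is correct and follows essentially the same route as the paper: both realise each $\delta_i$ by an anti-automorphism $\iota_i$ with $\iota_i^2$ inner (the paper cites \cite[Prop 2.1]{CPS} for this correspondence), form $\iota_1\otimes\iota_2$ on $A_1\otimes A_2$, observe that its square is the inner automorphism $\alpha_1\otimes\alpha_2$, and verify that the induced duality fixes simples via $(M_1\otimes M_2)^*\isom M_1^*\otimes M_2^*$ and $\delta_i L_i(x)\isom L_i(x)$. The only cosmetic difference is your detour through the basic algebra and the citation of \cite{Irv} rather than \cite{CPS} for the duality-to-anti-automorphism dictionary.
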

\begin{proof}
From \cite[Prop 2.1]{CPS}, a duality functor (not necessarily fixing simple modules) corresponds to an anti-automorphism $\iota_i$ of $A_i$ such that $\iota_i^2$ is an inner automorphism $\alpha_i$ of $A_i$.  We see that $\iota := \iota_1\otimes\iota_2$ is an anti-automorphism on $A_1\otimes A_2$ such that $\iota^2 = \alpha_1\otimes \alpha_2$ which is also an inner automorphism of $A$.  Consequently, $\iota$ induces a duality functor $\delta$, which maps $M\in \lmod{A}$ to the vector space $M^*=\Hom_F(M,F)$, with the $A$-action given by $a\cdot f(m)=f(\iota(a)m)$.  In particular, for finite dimensional modules $M_i \in \lmod{A_i}$ for $i=1,2$, since $(M_1\otimes M_2)^*\isom M_1^*\otimes M_2^*$, we have $\delta(M_1\otimes M_2) \isom \delta_1M_1\otimes \delta_2M_2$, where $\delta_i$ is the BGG duality functor on $\lmod{A_i}$.  Since $\delta_iL_i(x)\isom L_i(x)$ for all simple $A_i$-module $L_i(x)$, it follows that the duality on $A_1\otimes A_2$ fixes simple $A_1\otimes A_2$-modules.
\end{proof}

\begin{lemma}\label{lem-balOnTensor}
If $A_1,A_2$ are standard Koszul (resp. balanced) algebras, then so is $A_1\otimes A_2$.
\end{lemma}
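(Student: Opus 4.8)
The plan is to equip $A = A_1\otimes A_2$ with the tensor product grading $A_n = \bigoplus_{p+q=n}(A_1)_p\otimes(A_2)_q$, and then manufacture the required minimal linear resolutions over $A$ by taking total complexes of the corresponding resolutions over $A_1$ and $A_2$. Since $F$ is algebraically closed, $A_0 \cong (A_1)_0\otimes(A_2)_0 \cong A/\Rad A$ stays semisimple, so this grading satisfies the standing assumption of the standard-Koszul set-up, and the standard graded lift of each structural $A$-module $X(x_1,x_2)=X_1(x_1)\otimes X_2(x_2)$ is the tensor product of the standard graded lifts over $A_1$ and $A_2$.

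Concretely, let $\projresstd_1(x_1)^\bullet$ and $\projresstd_2(x_2)^\bullet$ be the minimal graded projective resolutions of $\std_1(x_1)$ and $\std_2(x_2)$, both linear by hypothesis. I would form the total complex $\mathcal{P}^\bullet := \mathrm{Tot}\big(\projresstd_1(x_1)^\bullet\otimes\projresstd_2(x_2)^\bullet\big)$, whose term in degree $n$ is $\bigoplus_{p+q=n}\projresstd_1(x_1)^p\otimes\projresstd_2(x_2)^q$. Three things must then be checked. First, $\mathcal{P}^\bullet$ is a projective resolution of $\std(x_1,x_2)=\std_1(x_1)\otimes\std_2(x_2)$: each term is a sum of modules $P_1(a)\otimes P_2(b)=P(a,b)$, hence projective, and since everything is flat over the field $F$ the Künneth formula has no $\Tor$-correction, giving $H^\bullet(\mathcal{P}^\bullet)\isom H^\bullet(\projresstd_1(x_1)^\bullet)\otimes H^\bullet(\projresstd_2(x_2)^\bullet)$, concentrated in degree $0$ where it equals $\std_1(x_1)\otimes\std_2(x_2)$. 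Second, $\mathcal{P}^\bullet$ is minimal: using $\Rad A = \Rad(A_1)\otimes A_2 + A_1\otimes\Rad(A_2)$ together with the fact that each factor differential has image in $\Rad(A_i)\cdot\projresstd_i(x_i)^\bullet$, the total differential $d_1\otimes 1 \pm 1\otimes d_2$ has image in $\Rad A\cdot\mathcal{P}^\bullet$. Third, $\mathcal{P}^\bullet$ is linear: linearity of the factors means every summand of $\projresstd_i(x_i)^p$ has the form $P_i(a)\langle p\rangle$, and since $(M\langle p\rangle)\otimes(N\langle q\rangle)=(M\otimes N)\langle p+q\rangle$, every summand of $\mathcal{P}^n$ is $P(a,b)\langle n\rangle$. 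By uniqueness of minimal projective resolutions, $\projresstd(x_1,x_2)^\bullet\isom\mathcal{P}^\bullet$ is linear. The minimal injective coresolution $\injrescostd(x_1,x_2)^\bullet$ of $\costd(x_1,x_2)$ is handled identically, using that $Q_1(a)\otimes Q_2(b)=Q(a,b)$ is injective over $A$. This settles the standard Koszul case.

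For the balanced case the same total-complex construction applies verbatim to the minimal linear tilting coresolutions $\tiltres_\std(x_i)^\bullet$ of $\std_i(x_i)$ and to the tilting resolutions of $\costd_i(x_i)$; exactness and linearity go through as above, using $T_1(a)\otimes T_2(b)=T(a,b)$. The one point requiring a different argument is minimality, since tilting modules are neither projective nor injective, so minimality must be phrased as the differentials being radical morphisms in $\add(T)$. Here I would invoke the identification $\Hom_A(T_1(a)\otimes T_2(b),T_1(a')\otimes T_2(b'))\isom\Hom_{A_1}(T_1(a),T_1(a'))\otimes\Hom_{A_2}(T_2(b),T_2(b'))$, which yields $\End_A(T)\isom\End_{A_1}(T_1)\otimes\End_{A_2}(T_2)$ (the Ringel dual); a morphism of the form $(\text{radical})\otimes\id$ or $\id\otimes(\text{radical})$ then lies in $\Rad\End_A(T)$, so the total differential is radical and the tilting (co)resolution over $A$ is minimal.

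The main obstacle I anticipate is precisely this last minimality check for the tilting (co)resolutions: whereas for projectives and injectives minimality is transparently governed by $\Rad A$, for tilting modules one must route minimality through radical morphisms of $\add(T)$ and verify that tensoring a radical map with an identity stays radical. Via the $\Hom$-tensor identity above this reduces to the elementary fact that $\Rad(B_1\otimes B_2)=\Rad(B_1)\otimes B_2 + B_1\otimes\Rad(B_2)$ for finite-dimensional algebras over an algebraically closed field, so the obstacle is real but surmountable. In fact all four resolution types can be treated uniformly by the single statement that the total complex of minimal linear $\mathcal{F}_i$-resolutions is a minimal linear $(\mathcal{F}_1\otimes\mathcal{F}_2)$-resolution, where $\mathcal{F}$ ranges over the additive categories of projective, injective, or tilting modules.
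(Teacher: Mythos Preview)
Your proposal is correct and follows essentially the same route as the paper: form the total complex of the tensor product of the relevant minimal linear (co)resolutions and verify that this is again a minimal linear (co)resolution of the correct module, using $X_1(x_1)\otimes X_2(x_2)=X(x_1,x_2)$ for projectives, injectives, and tiltings. In fact you supply more detail than the paper does, notably the careful treatment of minimality for the tilting case via radical morphisms in $\add(T)$ and the identity $\Rad(B_1\otimes B_2)=\Rad(B_1)\otimes B_2 + B_1\otimes\Rad(B_2)$, which the paper simply asserts without justification.
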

\begin{proof}
We use the fact that the total complex of the tensor product of (graded) projective resolutions is the projective resolution of the corresponding tensor product of modules, see for example \cite[Lemma 3.6]{BO}.  This tensor product of resolutions also preserves minimality and linearity.  Applying the dual argument on the injective coresolution, we have the claim for standard Koszulity.

For (graded) tilting (co)resolutions, one just does the same trick.  Using the fact that $T_1(x_1)\otimes T_2(x_2) = T(x_1,x_2)$ is a tilting $A_1\otimes A_2$-module, the tensor product of the tilting (co)resolutions will then be the tilting (co)resolution of the tensor product of the modules, which also preserves minimality and linearity.  Hence the claim for balancedness.
\end{proof}

\begin{prop}
Let $A_1, A_2$ be standard Koszul BGG algebra satisfying condition (H), then so is the tensor product algebra $A_1\otimes A_2$.
\end{prop}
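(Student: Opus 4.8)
The plan is to obtain the ``standard Koszul BGG'' half of the conclusion for free from the two preceding lemmas, and then to exhibit an explicit weight function on the product poset witnessing condition (H). By Lemma \ref{lem-dualOnTensor}, $A_1\otimes A_2$ is a BGG algebra, and by Lemma \ref{lem-balOnTensor} it is standard Koszul; in particular it is Koszul, and its Koszul grading is the tensor product (total-degree) grading. Under this grading the structural modules remain tensor products, so $\std(x_1,x_2)=\std_1(x_1)\otimes\std_2(x_2)$ and $L(y_1,y_2)=L_1(y_1)\otimes L_2(y_2)$, with each simple concentrated in degree $0$. This identification of gradings is what lets us transport the radical-layer bookkeeping of condition (H) across the tensor product.

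The natural candidate for the weight function is $h(x_1,x_2):=h_1(x_1)+h_2(x_2)$, where $h_i:I_i\to\{0,\dots,n_i\}$ witnesses condition (H) for $A_i$; this takes values in $\{0,\dots,n_1+n_2\}$, as required. To verify the defining implication I would compute graded composition multiplicities factor-by-factor. Since the degree-$l$ part of $\std_1(x_1)\otimes\std_2(x_2)$ is $\bigoplus_{l_1+l_2=l}(\std_1(x_1))_{l_1}\otimes(\std_2(x_2))_{l_2}$, and the simples of $A_1\otimes A_2$ factor as $L_1\otimes L_2$, one obtains
\[
[\std(x_1,x_2):L(y_1,y_2)\langle l\rangle]=\sum_{l_1+l_2=l}[\std_1(x_1):L_1(y_1)\langle l_1\rangle]\,[\std_2(x_2):L_2(y_2)\langle l_2\rangle].
\]

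If the left-hand side is nonzero, some summand on the right is nonzero, so there are $l_1,l_2$ with $l_1+l_2=l$ and both factors nonzero; applying condition (H) for $A_1$ and $A_2$ separately gives $h_1(y_1)=h_1(x_1)-l_1$ and $h_2(y_2)=h_2(x_2)-l_2$, and summing yields $h(y_1,y_2)=h(x_1,x_2)-l$, which is exactly the desired implication. I do not expect a genuine obstacle here; the only points needing care are the identification of the Koszul grading on the tensor product with the total grading (so that radical layers match graded degrees on both sides), and the observation that condition (H) already forces each $L_i(y_i)$ to occur in the \emph{single} degree $l_i=h_i(x_i)-h_i(y_i)$ of $\std_i(x_i)$, which makes the above multiplicity sum unambiguous and the resulting value of $l$ consistent.
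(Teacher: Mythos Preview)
Your proposal is correct and follows essentially the same route as the paper: invoke Lemmas \ref{lem-dualOnTensor} and \ref{lem-balOnTensor} for the BGG and standard Koszul parts, define $h(x_1,x_2)=h_1(x_1)+h_2(x_2)$, decompose the degree-$l$ piece of $\std(x_1,x_2)$ as a sum over $l_1+l_2=l$, and then add the two instances of condition (H). Your explicit multiplicity formula and the remark on uniqueness of $l_i$ are small elaborations, but the argument is the same.
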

\begin{proof}
The grading of $A=A_1\otimes A_2$ comes naturally from the usual grading on tensor products, Koszulity for this grading follows from Lemma \ref{lem-balOnTensor} above.  BGG duality follows from Lemma \ref{lem-dualOnTensor}.  Given functions $h_j:I_j\to \{0,\ldots,n_i\}$ of $A_j$'s so that $A_j$'s satisfy condition (H), we define the required function $h:I \to \{ 0,1,\ldots,n=n_1+n_2\}$ as follows
\begin{eqnarray*}
h: I & \to & \{0,1,\ldots,n\}\\
 (x_1,x_2) &\mapsto & h_1(x_1)+h_2(x_2)
\end{eqnarray*}
Examining the (Koszul) graded structure of $\std(x)$ closely, its $l$-th graded piece is given by:
\[
(\std(x))_l = \bigoplus_{l_1+l_2 = l} (\std_1(x_1))_{l_1} \otimes(\std_2(x_2))_{l_2}
\]
Therefore, when $[\std(x):L(y)\langle l\rangle]_q\neq 0$, there are some $l_1, l_2$ such that $[\std_j(x_j):L(y_j)\langle l_j\rangle]_q\neq 0$ for $j=1,2$.  Since the $A_j$'s satisfy condition (H) with respect to the $h_j$'s, we have
\begin{eqnarray*}
h_j(x_j) - h_j(y_j) &=& l_j \qquad\qquad\qquad  \mbox{for } j=1,2\\
\imply\quad (h_1(x_1)+h_2(x_2)) - (h_1(y_1)+h_2(y_2)) &=& l_1+l_2 \;\;=\;\; l \\
\imply\hspace*{2.3cm} h(x) - h(y) &=& l
\end{eqnarray*}
and the condition (H) is satisfied.
\end{proof}

By induction, one shows that the above results extend to all finite tensor products of quasi-hereditary algebras.

\section{Wreath product algebras}
Given any (graded) $A$-module $M$, one can take the wreath product of $M$ with symmetric group, i.e. $M^{[w]}:=M^{\otimes w}\otimes F\sn_w$, to get an $A^{[w]}$-module, with the obvious action.  Note that if $M$ is graded, then so is $M^{[w]}$, where the induced grading is given by the usual $\integer$-grading on tensor product of graded modules and putting the component $F\sn_w$ in degree zero.  Wreath product preserves many nice properties of an algebra, its modules, and its complexes (see later paragraphs).  We start by collecting some results from \cite{CT} and \cite[Section 2]{CLS} that will be useful to us.  We remind the reader again that throughout this section, $w$ is a positive integer with $w!$ invertible in the underlying field $F$.  For convenience, we simply say ``wreathing an object" instead of ``taking the wreath product of an object with the symmetric group".

Let $\{X(i)|i\in I\}$ be a (structural) family of $A$-modules, and the cardinality of $I$ be $n\in\mathbb{N}$.  Then there is a family of  $A^{[w]}$-modules, $\{X(\blambda)|\blambda\in \Lambda_w^I\}$, which are indexed by the set of $I$-tuples of partitions such that the sum of the size of the entries is $w$, i.e.
\[
\Lambda_w^I :=\left\{ \blambda = (\lambda^{(1)},\ldots,\lambda^{(n)}) \;\left|\;\; \lambda^{(i)}\vdash \omega_i \mbox{ and }  \sum_{i\in I}\omega_i = w\right.\right\}
\]

\begin{lemma}
For $X\in \{ P,Q,L, \std,\costd, T\}$, the family $\{ X(\blambda) | \blambda \in \Lambda_w^I\}$ is the structural family of $A^{[w]}$-modules.
\end{lemma}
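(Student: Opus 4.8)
The plan is to realise $A^{[w]}$ as the skew group algebra $A^{\otimes w}\rtimes F\sn_w$, where $\sn_w$ permutes the tensor factors, and to exploit the semisimplicity of $F\sn_w$ (guaranteed by $w!$ being invertible in $F$) to transport structural information from $A^{\otimes w}$ through Clifford theory. Since $A^{\otimes w}$ is quasi-hereditary by the results of Section 2, with structural modules $X(i_1)\otimes\cdots\otimes X(i_w)$ indexed by $I^w$, the first step is to describe the $X(\blambda)$ explicitly: for $\blambda=(\lambda^{(1)},\ldots,\lambda^{(n)})$ with $\lambda^{(i)}\vdash\omega_i$ and $\sum_{i}\omega_i=w$, one forms the module induced up the Young subalgebra $A^{[\omega_1]}\otimes\cdots\otimes A^{[\omega_n]}$ from the external tensor product of the modules $X(i)^{\otimes\omega_i}\otimes S^{\lambda^{(i)}}$, where $S^{\lambda^{(i)}}$ is the Specht module for $\sn_{\omega_i}$ acting diagonally. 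This is the construction recorded in \cite{CT} and \cite[Section 2]{CLS}.

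First I would establish the statement for the simple modules. The simple $A^{\otimes w}$-modules $L(i_1)\otimes\cdots\otimes L(i_w)$ are permuted by $\sn_w$, the orbits are parametrised by the multiplicity vectors $(\omega_i)_{i\in I}$, and the stabiliser of a representative is the Young subgroup $\sn_{\omega_1}\times\cdots\times\sn_{\omega_n}$. Because $w!$ is invertible, Clifford theory for skew group algebras applies: the simple $A^{[w]}$-modules are obtained by inducing a simple of the stabiliser, and since the inertial action is by a genuine permutation of equal tensor factors the extension is unobstructed, so the relevant simples of the stabiliser are exactly the $S^{\lambda^{(1)}}\boxtimes\cdots\boxtimes S^{\lambda^{(n)}}$. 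This produces a complete, irredundant list indexed precisely by $\Lambda_w^I$, identifying $\{L(\blambda)\}$ as the simple $A^{[w]}$-modules.

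Next I would handle projectives, standards and costandards by exactness and compatibility of the wreathing construction with the relevant functors. Induction from the Young subalgebra is exact and carries projectives to projectives (as $A^{[w]}$ is free over it), so the module $P(\blambda)$ built from the $P(i)$ is projective, and a radical-top computation identifies it as the projective cover of $L(\blambda)$. The modules $\std(\blambda)$ and $\costd(\blambda)$ inherit $\std$- and $\costd$-filtration behaviour because the external tensor product of $\std$-filtered modules is $\std$-filtered over $A^{\otimes w}$ and the $\sn_w$-action is exact; Frobenius reciprocity then yields the correct $\Hom$ and $\Ext$ vanishing between them. To pin down quasi-heredity I would equip $\Lambda_w^I$ with a partial order induced from $\leq$ on $I$ through the multiplicity vectors $(\omega_i)_{i\in I}$ (the Specht labels playing no role, reflecting the semisimplicity of $F\sn_w$), and verify that $P(\blambda)$ has a $\std$-filtration with top $\std(\blambda)$ and higher sections $\std(\bmu)$ only for $\bmu>\blambda$, together with the BGG-type reciprocity; both follow from the corresponding statements over $A^{\otimes w}$ and the isotypic decomposition of the induced modules. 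Finally, $Q(\blambda)$ and $T(\blambda)$ follow by the duality of Lemma \ref{lem-dualOnTensor} (extended to the wreath setting) and by the characterisation of tilting modules as those admitting both a $\std$- and a $\costd$-filtration, the latter being preserved by the exact wreathing functor.

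The main obstacle I anticipate is not the identification of the modules themselves but the verification of the quasi-hereditary axioms in the correct order: choosing the partial order on $\Lambda_w^I$ so that the $\std(\blambda)$-filtrations of the $P(\blambda)$ have the required triangularity, and checking that inducing from the Young subalgebra introduces no unwanted extensions. The semisimplicity of $F\sn_w$ is the engine that makes these checks tractable, since it lets one split every induced module into isotypic pieces and reduce each filtration statement to the already-understood tensor-product algebra $A^{\otimes w}$.
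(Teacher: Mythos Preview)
Your approach is substantively different from the paper's. The paper's proof is essentially a citation: for $X\in\{P,Q,L,\std,\costd\}$ it appeals directly to \cite[Lemma 3.8, 3.9, Section~6]{CT}, where the modules $X(\blambda)$ are constructed as indecomposable summands of $X^{[w]}$ and the quasi-hereditary structure of $A^{[w]}$ is established; the only additional content is the observation, drawn from \cite[Section~4]{CT}, that each $T(\blambda)$ carries both a $\std$- and a $\costd$-filtration and is therefore tilting. What you have written is, in effect, a self-contained sketch of the argument that \cite{CT} actually carries out --- Clifford theory for the skew group algebra $A^{\otimes w}\rtimes F\sn_w$ to classify simples, then transport of projectives and filtered modules via induction from Young subalgebras. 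That is a reasonable route and makes the mechanism visible, but it is much more than the paper does, and you should be aware that you are re-deriving the cited reference rather than supplementing it.

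Two cautions if you pursue your route. First, you invoke BGG duality to obtain the $Q(\blambda)$, but the present lemma does not assume $A$ is a BGG algebra; you should instead argue dually to the projective case (injective hull of $L(\blambda)$), or note that the construction commutes with $\Hom_F(-,F)$. Second, the phrase ``exact wreathing functor'' is not right: as the paper records later, $(-)^{[w]}$ is neither additive nor exact on modules. What you actually need, and what does hold, is that induction from the Young subalgebra $A^{[\omega_1]}\otimes\cdots\otimes A^{[\omega_n]}$ is exact and that external tensor products of $\std$-filtered (resp.\ $\costd$-filtered) modules remain so over $A^{\otimes w}$; these two facts carry the filtration statements for $\std(\blambda)$, $\costd(\blambda)$ and $T(\blambda)$. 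You should also say why the $T(\blambda)$ are indecomposable and exhaust the tilting modules; the paper gets this for free from the summand decomposition of $T^{[w]}$ in \cite[Lemma~3.8]{CT} together with a count against $|\Lambda_w^I|$.
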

\begin{proof}
For each $\blambda \in \Lambda_w^I$, the $A^{[w]}$-module $X(\blambda)$ occur as indecomposable summands of wreathing the direct sum of structural modules \cite[Lemma 3.8]{CT}:
\[
\left(\bigoplus_{i\in I}X(i)\right)^{[w]} \isom \bigoplus_{\blambda\in\Lambda_w^I} X(\blambda)^{\oplus m(\blambda)}
\]
for some $m(\blambda) \in \mathbb{N}$ which depends only on $\blambda$.  For $X\in\{P,Q,L,\std,\costd\}$, \cite[Lemma 3.8, 3.9, Section 6]{CT} already showed that the new family $\{ X(\blambda)\}$ indeed is the corresponding structural family.  Also from \cite[Section 4]{CT}, one can see that the induced family $\{T(\blambda)|\blambda \in \Lambda_w^I\}$ from the family of tilting $A$-modules is filtered by $\{\std(\blambda)\}$ as well as $\{\costd(\blambda)\}$.  Hence $\{ T(\blambda)\}$ is indeed the family of tilting $A^{[w]}$-modules.
\end{proof}

This construction of a new family of objects can also be applied to maps.  Given two families $\{X(i)\}$, $\{Y(i)\}$ of $A$-modules indexed by $i\in I$, and a family of maps $\{ f_i: X(i)\to Y(i) | i\in I\}$, then there is a family of maps of $A^{[w]}$-modules $\{ f_\blambda :X(\blambda)\to Y(\blambda)| \blambda\in\Lambda_w^I\}$, see \cite[3.9(2)]{CT}.  In particular, if $\{f_i\}$ is a family of one of the structural maps appeared in (\ref{eqn-strucmap}), then $\{f_\blambda\}$ is the corresponding family of structural maps.

From now on, we assume all the modules are graded.  As mentioned in the first paragraph of this section, the wreathing construction applies to bounded complexes of (graded) $A$-modules as follows.  Let $C^\bullet$ be a bounded complex of finite dimensional $A$-modules with differential $d$, then there is a differential on the $A^{\otimes w}$-complex $C^{\otimes w \bullet}$, which can be written as
\[
\sum_{a+b=w-1} 1^{\otimes a} \otimes d \otimes 1^{\otimes b}.
\]
The differential on the $A^{[w]}$-complex $C^{[w]\bullet}$ is given by tensoring the above differential with $1_{F\sn_w}$.  One can also wreath a chain map of $A$-complexes, which will consequently make $(-)^{[w]}$ a functor that preserves homotopy \cite[Lemma 2.4]{CLS}.  In another words, one can regard $(-)^{[w]}$ as a functor from the bounded homotopy category $K^b(\gr{A})$ to the bounded homotopy category $K^b(\gr{A^{[w]}})$.  Note that this functor is polynomial, non-linear, non-additive, and non-triangulated (or non-exact on the full subcategory of modules), see \cite[Section 2]{CLS}.  On the other hand, the wreathing functor preserves monomorphisms and epimorphisms on modules.  We also have a stronger result:

\begin{lemma}\label{lem-wrHomology}
Let $C^\bullet$ be a bounded complex of finite dimensional $A$-modules.  Then the homology of wreathing $C^\bullet$ is the isomorphic to wreathing the homology of $C^\bullet$. i.e. $H(C^{[w]\bullet}) \isom H(C^\bullet)^{[w]}$.  In particular, $f^{[w]}$ is a quasi-isomorphism in $K^b(\gr{A^{[w]}})$ if $f$ is a quasi-isomorphism in $K^b(\gr{A})$.
\end{lemma}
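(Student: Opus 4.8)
The plan is to reduce the homology statement to the analogous fact for the $w$-fold tensor product complex $C^{\otimes w\bullet}$ over $A^{\otimes w}$, and then glue on the $F\sn_w$-factor, which is harmless precisely because $w!$ is invertible and $F\sn_w$ is placed in degree zero with the trivial differential. The first step is to recall the K\"unneth formula: since we are over a field $F$, all homology is automatically ``free,'' so there are no $\Tor$-correction terms, and for a single tensor factor one gets $H(C^\bullet\otimes D^\bullet)\isom H(C^\bullet)\otimes H(D^\bullet)$. Applying this $(w-1)$ times to the complex $C^{\otimes w\bullet}$ equipped with the differential $\sum_{a+b=w-1}1^{\otimes a}\otimes d\otimes 1^{\otimes b}$ yields $H(C^{\otimes w\bullet})\isom H(C^\bullet)^{\otimes w}$ as $A^{\otimes w}$-modules. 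The key point to check here is that the iterated K\"unneth isomorphism is compatible with the $A^{\otimes w}$-module structure and the induced $\sn_w$-action by place permutation, which is the standard sign/shuffle bookkeeping.

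The second step is to install the symmetric group. Tensoring the whole complex with $F\sn_w$ (in degree zero, with differential $d\otimes 1_{F\sn_w}$) does not change homology as an $F\sn_w$-module factor, so $H(C^{[w]\bullet})\isom H(C^{\otimes w\bullet})\otimes F\sn_w \isom H(C^\bullet)^{\otimes w}\otimes F\sn_w$. Unwinding the definition $M^{[w]}=M^{\otimes w}\otimes F\sn_w$ with $M=H(C^\bullet)$, the right-hand side is exactly $H(C^\bullet)^{[w]}$ as an $A^{[w]}$-module; one must verify that the $A^{[w]}$-action induced by the wreathing construction matches the one assembled from the K\"unneth isomorphism together with the $\sn_w$-twist in the multiplication rule, but this is forced by functoriality of $(-)^{[w]}$ and the explicit formula for the differential on $C^{[w]\bullet}$ recalled just before the statement.

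For the ``in particular'' clause, suppose $f$ is a quasi-isomorphism, so $H(f)$ is an isomorphism of $A$-modules. The wreathing functor $(-)^{[w]}$ is defined on maps, and by the naturality of the K\"unneth isomorphism established above, $H(f^{[w]})$ is identified with $H(f)^{[w]}$ under the isomorphisms $H(C^{[w]\bullet})\isom H(C^\bullet)^{[w]}$. Since wreathing a module isomorphism is again an isomorphism (the functor $M\mapsto M^{[w]}$ sends isomorphisms to isomorphisms, as noted for monomorphisms and epimorphisms), $H(f^{[w]})$ is an isomorphism, hence $f^{[w]}$ is a quasi-isomorphism.

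\emph{The main obstacle} I expect is not the K\"unneth formula itself but the bookkeeping of the $\sn_w$-equivariance: one needs the iterated K\"unneth isomorphism $H(C^{\otimes w\bullet})\isom H(C^\bullet)^{\otimes w}$ to be $\sn_w$-equivariant for the place-permutation action, where $\sn_w$ acts on $C^{\otimes w\bullet}$ by permuting tensor factors (with Koszul signs coming from the homological degree). The subtlety is that permuting factors of a complex introduces signs depending on degrees, so one must confirm that these signs are precisely the ones built into the standard $\sn_w$-action on $H(C^\bullet)^{\otimes w}$ and that the invertibility of $w!$ guarantees the induced and the homology functors commute without any characteristic-$p$ obstruction (e.g.\ no failure of $F\sn_w$ being semisimple spoiling exactness). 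Once this equivariance is verified, the module-structure matching is formal.
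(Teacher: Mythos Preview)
Your proposal is correct and reaches the same conclusion, but the route differs from the paper's. The paper argues by hand: it splits $C^\bullet$ as a complex of $F$-vector spaces into $H^\bullet(C^\bullet)\oplus D$, where $D$ is a direct sum of shifted copies of the contractible two-term complex $(F\to F)$; expanding $(H^\bullet(C^\bullet)\oplus D)^{\otimes w}\otimes F\sn_w$ binomially, every summand containing at least one $D$-factor is acyclic, and what survives is exactly $H^\bullet(C^\bullet)^{\otimes w}\otimes F\sn_w = H^\bullet(C^\bullet)^{[w]}$. You instead invoke the K\"unneth formula over a field as a black box to obtain $H(C^{\otimes w\bullet})\isom H(C^\bullet)^{\otimes w}$ and then tensor with the degree-zero complex $F\sn_w$. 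These are the same argument at different levels of abstraction---the paper's splitting is precisely how one proves K\"unneth over a field---so your version is shorter and more conceptual, while the paper's is more explicit and self-contained. Your emphasis on the $\sn_w$-equivariance of the K\"unneth isomorphism (with Koszul signs) is a genuine point that the paper leaves implicit.

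One minor over-caution: your worry that invertibility of $w!$ is needed so that ``induced and homology functors commute'' is not relevant to this particular lemma. Since $C^{[w]\bullet}=C^{\otimes w\bullet}\otimes_F F\sn_w$ and $F\sn_w$ is free as an $F$-module in any characteristic, homology commutes with $-\otimes_F F\sn_w$ automatically; semisimplicity of $F\sn_w$ is used elsewhere in the section (through the results imported from \cite{CT}) but plays no role here.
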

\begin{proof}
Note that for each $n\in\integer$, $C^n = H^n \oplus B^n \oplus L^n$ as a $F$-vector spaces, where $H^n = H^n(C^\bullet)$, $\ker d_C^n = Z^n(C^\bullet) = H^n\oplus B^n$.  In particular, $d^n(C^n) = d^n(L^n) = B^{n+1}$.  We write $(F\to F)^{\oplus \dim L^n}$ to represent the exact part $L^n\xrightarrow{d^n} B^{n+1}$ of the complex.  So we can write $C^\bullet$ as a complex of $F$-vector spaces as follows
\[
C^\bullet = \underbrace{\left(\bigoplus_{n\in \integer} H^n\right)}_{H^\bullet(C^\bullet)} \oplus  \underbrace{\bigoplus_{n\in\integer}( F \to F )^{\oplus m_n} [n]}_{D:=},
\]
where $F\to F$ is an exact complex concentrated in degree 0 and 1, $m_n=\dim L^n$, and $[n]$ being the homological shift by $n$ times.  Now $C^{[w]\bullet}$ is given by
\begin{eqnarray*}
H^\bullet(C^\bullet)^{\otimes w}\otimes F\sn_w &\oplus& \left(\bigoplus_{i=1}^w D^{\otimes i-1}\otimes H^\bullet(C^\bullet) \otimes D^{\otimes w-i-1}\right)\otimes F\sn_w\\
& \oplus& \cdots \oplus\;\; D^{\otimes w}\otimes F\sn_w.
\end{eqnarray*}
Note that (the total complex of)  $((F \to F)^{\oplus m})^{\otimes k}$ has zero homology for any $m,k\geq 0$.  By considering the description of the differential on $C^{[w] \bullet}$, which is ``the same" as that for $C^{\otimes w\bullet}$, we have vanishing homology everywhere apart from $H^\bullet(C^\bullet)^{\otimes w}\otimes F\sn_w = H^\bullet(C^{\bullet})^{[w]}$.

If $f^\bullet: C^\bullet\to D^\bullet$ is a quasi-isomorphism, then we have vector space isomorphism
\[
H^\bullet(f^\bullet)=f^\bullet|_{H^\bullet(C^\bullet)} : H^\bullet(C^\bullet) \isomto H^\bullet(D^\bullet).
\]
Therefore, the wreath $f^{[w]\bullet}$ of $f^\bullet$ induces:
\begin{eqnarray*}
H^\bullet(f^{[w]\bullet}) = f^{[w]\bullet}|_{H^\bullet(C^{[w]\bullet})} = f^{[w]\bullet}|_{H^\bullet(C^\bullet)^{[w]}} = (f^\bullet|_{H^\bullet(C^\bullet)})^{[w]},
\end{eqnarray*}
which is an isomorphism from $H^\bullet(C^\bullet)^{[w]} = H^\bullet(C^{[w]\bullet})$ to $H^\bullet(D^\bullet)^{[w]} = H^\bullet(D^{[w]\bullet})$.
\end{proof}
\begin{remark}
Since $A$ is quasi-hereditary, $A$ has finite global dimension, and we have natural triangle equivalence $D^b(\gr{A})\simeq K^b(\gr{A})$.  The lemma can therefore be stated using derived category instead of homotopy category.
\end{remark}

The following three propositions are folklore that have not been shown in the literature explicitly to the best of our knowledge.
\begin{prop}\label{prop-wrExtCommute}
Wreathing and taking Ext-algebra of structural modules are commuting operations on algebras. i.e. $\Ext_{A^{[w]}}^\bullet (X^{[w]},X^{[w]})^\op \isom (A^X)^{[w]}$.
\end{prop}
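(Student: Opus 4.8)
The plan is to compute $\Ext^\bullet_{A^{[w]}}(X^{[w]},X^{[w]})$ from a wreathed projective resolution and read off the answer through Lemma \ref{lem-wrHomology}. First I take a minimal graded projective resolution $P^\bullet\to X$ over $A$ and wreath it to obtain $P^{[w]\bullet}$. Each graded piece of $P^{[w]\bullet}$ is a direct sum of modules of the form $\Hom$-free inductions $P^{n_1}\otimes\cdots\otimes P^{n_w}\otimes F\sn_w$ with $\sum_j n_j$ fixed, hence projective over $A^{[w]}$, since each $P^{n_1}\otimes\cdots\otimes P^{n_w}$ is $A^{\otimes w}$-projective and induction to $A^{[w]}$ preserves projectivity. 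Since $P^\bullet\isomto X$ (with $X$ in degree $0$) is a quasi-isomorphism, Lemma \ref{lem-wrHomology} shows that $P^{[w]\bullet}\isomto X^{[w]}$ is again a quasi-isomorphism. Thus $P^{[w]\bullet}$ is a projective resolution of $X^{[w]}$, and $\Ext^\bullet_{A^{[w]}}(X^{[w]},X^{[w]})$ is the cohomology of the endomorphism DG algebra $\Hom^\bullet_{A^{[w]}}(P^{[w]\bullet},P^{[w]\bullet})$, whose composition product induces the Yoneda product.

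The heart of the argument is a wreath formula for Hom-complexes. Writing $A^{[w]}=A^{\otimes w}\rtimes\sn_w$ as a skew group algebra (here $w!$ invertible makes induction and restriction along $A^{\otimes w}\subseteq A^{[w]}$ exact and well behaved), I use that $M^{[w]}$ is induced from $M^{\otimes w}$, together with the adjunction $\Hom_{A^{[w]}}(\mathrm{Ind}\,U,V)\isom\Hom_{A^{\otimes w}}(U,\mathrm{Res}\,V)$ and the Mackey decomposition $\mathrm{Res}\,\mathrm{Ind}\,N^{\otimes w}\isom\bigoplus_{\sigma\in\sn_w}{}^\sigma(N^{\otimes w})$. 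Since all tensor factors coincide, each twist ${}^\sigma(N^{\otimes w})$ is isomorphic to $N^{\otimes w}$, so
\[
\Hom_{A^{[w]}}(M^{[w]},N^{[w]})\isom\bigoplus_{\sigma\in\sn_w}\Hom_{A^{\otimes w}}(M^{\otimes w},N^{\otimes w})\isom\Hom_A(M,N)^{\otimes w}\otimes F\sn_w=\Hom_A(M,N)^{[w]},
\]
where the middle isomorphism is the tensor (Künneth) formula for $\Hom$ already invoked in Proposition \ref{prop-tenExtCommute} via \cite{BO}. Applying this degreewise to the total Hom-complex and tracking the Koszul signs incurred when permuting tensor factors of complexes, I expect to obtain an isomorphism of DG algebras
\[
\Hom^\bullet_{A^{[w]}}(P^{[w]\bullet},P^{[w]\bullet})\isom\big(\Hom^\bullet_A(P^\bullet,P^\bullet)\big)^{[w]}.
\]

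Granting this, set $\mathcal{E}^\bullet:=\Hom^\bullet_A(P^\bullet,P^\bullet)$, a DG algebra with $H^\bullet(\mathcal{E}^\bullet)\isom\Ext^\bullet_A(X,X)$ as graded algebras. Viewing $\mathcal{E}^\bullet$ as a complex of $F$-vector spaces, Lemma \ref{lem-wrHomology} (taking $A=F$) together with the Künneth isomorphism yields
\[
\Ext^\bullet_{A^{[w]}}(X^{[w]},X^{[w]})=H^\bullet\big(\mathcal{E}^{[w]\bullet}\big)\isom H^\bullet(\mathcal{E}^\bullet)^{[w]}\isom\Ext^\bullet_A(X,X)^{[w]}
\]
as graded algebras, the product on the right being the wreath product built from the Yoneda product and the $\sn_w$-multiplication. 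Finally, taking opposite algebras and using the general identity $(B^{[w]})^\op\isom(B^\op)^{[w]}$ (conjugate the symmetric group part by $\sigma\mapsto\sigma^{-1}$) converts this into $\Ext^\bullet_{A^{[w]}}(X^{[w]},X^{[w]})^\op\isom(A^X)^{[w]}$, as required.

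The main obstacle is the second paragraph: promoting the purely additive Hom-space formula to a multiplicative statement at the level of DG algebras. Because wreathing of complexes is defined through the total complex of $P^{\otimes w\,\bullet}$, the identification must be compatible simultaneously with the total differential, with the Koszul signs produced when $\sn_w$ permutes homogeneous tensor factors, and with the composition product; in particular the way the group multiplication of $\sn_w$ recombines the $\sigma$-components of the Mackey decomposition must match the wreath multiplication. Verifying that the adjunction and Mackey isomorphisms can be chosen naturally enough to respect all three structures, rather than merely giving a vector-space isomorphism, is the delicate bookkeeping on which the whole proof rests; everything else reduces to the already-established Lemma \ref{lem-wrHomology}, Proposition \ref{prop-tenExtCommute}, and standard properties of skew group algebras.
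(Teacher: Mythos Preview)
Your proof is correct and follows essentially the same route as the paper: wreath a projective resolution, use the induction--adjunction--Mackey chain to identify $\Hom_{A^{[w]}}(M^{[w]},N^{[w]})\isom\Hom_A(M,N)^{[w]}$, and then invoke Lemma~\ref{lem-wrHomology} to pass to cohomology. The only notable difference is emphasis: you flag the DG-algebra multiplicativity as the delicate point, whereas the paper dispatches it in one line by observing $f^{[w]}g^{[w]}=(fg)^{[w]}$ from the construction of wreathing on maps.
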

\begin{proof}
Let $\projresX$ be the projective resolution of $X$.  We claim that we have the following algebra isomorphisms:
\begin{eqnarray*}
\Ext_{A^{[w]}}^\bullet (X^{[w]},X^{[w]}) &=& H^\bullet( \End_{A^{[w]}}(\projresX^{[w]}))\\
&\isom& H^\bullet( \End_A(\projresX)^{[w]} )\\
&\isom& H^\bullet( \End_A(\projresX) )^{[w]} \\
&=& (\Ext_A^\bullet(X,X))^{[w]}
\end{eqnarray*}
Note that the endomorphism rings above are taken over the dg algebra $A$ (dga concentrated in degree zero).  The third (algebra) isomorphism is justified by Lemma \ref{lem-wrHomology} above.

To justify and second isomorphism, we note that for any dg $A$-modules $M,N$, we have
\begin{eqnarray*}
& & \Hom_{A^{[w]}} (M^{\otimes w}\otimes k\sn_w , N^{\otimes w}\otimes k\sn_w )\\
 &\isom& \Hom_{A^{[w]}} ( A^{[w]}\otimes_{A^{\otimes w}} M^{\otimes w} , A^{[w]}\otimes_{A^{\otimes w}}N^{\otimes w} )\\
&\isom & \Hom_{A^{\otimes w}} ( M^{\otimes w} , \Hom_{A^{[w]}}(A^{[w]}, A^{[w]}\otimes_{A^{\otimes w}}N^{\otimes w} ))\\
&\isom & \Hom_{A^{\otimes w}} (M^{\otimes w} , \bigoplus_{\sigma\in\sn_w} \sigma\otimes N^{\otimes w})\\
&\isom & \bigoplus_{\sigma\in\sn_w}\sigma\otimes \Hom_{A^{\otimes w}}(M^{\otimes w},N^{\otimes w})\\
&\isom & A^{[w]}\otimes_{A^{\otimes w}} \Hom_A(M,N)^{\otimes w}\\
&\isom & \Hom_A(M,N)^{[w]}
\end{eqnarray*}
Therefore, we have vector space isomorphism $\End_{A^{[w]}}(\projresstd^{[w]})\isom \End_A(\projresstd)^{[w]}$, and this becomes an algebra isomorphism because $f^{[w]}g^{[w]}=(fg)^{[w]}$ by construction.
\end{proof}

\begin{lemma}\label{lem-dualOnWr}
Let $A$ be a BGG algebra, then its wreath product $A^{[w]}$ is also a BGG algebra.
\end{lemma}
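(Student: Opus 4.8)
The plan is to mimic the strategy of Lemma~\ref{lem-dualOnTensor}: produce an anti-automorphism of $A^{[w]}$ whose square is inner, invoke \cite[Prop 2.1]{CPS} to obtain a duality functor $\delta^{[w]}$ on $\lmod{A^{[w]}}$, and then verify that $\delta^{[w]}$ fixes the simple modules $L(\blambda)$. Write the duality of $A$ as the anti-automorphism $\iota$ with $\iota^2=\alpha$ the inner automorphism given by conjugation by a unit $u\in A$. I would combine $\iota$ on each tensor factor with the standard anti-automorphism $\sigma\mapsto\sigma^{-1}$ of $F\sn_w$, defining
\begin{equation*}
\iota^{[w]}(a_1\otimes\cdots\otimes a_w\otimes\sigma) := \iota(a_{\sigma(1)})\otimes\cdots\otimes\iota(a_{\sigma(w)})\otimes\sigma^{-1},
\end{equation*}
where the reindexing of the tensor factors is dictated by the requirement that the group part interact correctly with the componentwise multiplication of $A^{[w]}$.

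First I would check that $\iota^{[w]}$ is an anti-automorphism. Writing an element as $a\otimes\sigma$ and unravelling the multiplication rule of $A^{[w]}$, a direct (if slightly tedious) comparison of the $i$-th tensor components shows $\iota^{[w]}((a\otimes\sigma)(b\otimes\tau))=\iota^{[w]}(b\otimes\tau)\,\iota^{[w]}(a\otimes\sigma)$, the point being that the inversion $\sigma\mapsto\sigma^{-1}$ exactly reverses the reindexing introduced by the wreath multiplication. Squaring gives $(\iota^{[w]})^2(a\otimes\sigma)=\iota^2(a_1)\otimes\cdots\otimes\iota^2(a_w)\otimes\sigma=\alpha^{\otimes w}(a)\otimes\sigma$, and since $\alpha$ is conjugation by $u$, this is conjugation in $A^{[w]}$ by the unit $u^{\otimes w}\otimes e$ (with $e\in\sn_w$ the identity); hence $(\iota^{[w]})^2$ is inner. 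By \cite[Prop 2.1]{CPS}, $\iota^{[w]}$ induces a duality functor $\delta^{[w]}=\Hom_F(-,F)$ on $\lmod{A^{[w]}}$ with action twisted by $\iota^{[w]}$, and $A^{[w]}$ is already known to be quasi-hereditary.

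It remains to show $\delta^{[w]}L(\blambda)\isom L(\blambda)$ for every $\blambda\in\Lambda_w^I$, which I expect to be the main obstacle, since---unlike the tensor case---the simple modules $L(\blambda)$ are not simply wreathed modules but carry the extra combinatorics of the symmetric groups. I would argue in two steps. Because $\iota^{[w]}$ restricts on the diagonal subalgebra $A^{\otimes w}$ to $\iota^{\otimes w}$, the functor $\delta^{[w]}$ commutes with restriction to $A^{\otimes w}$ and there acts as $\delta^{\otimes w}$, which fixes every simple $A^{\otimes w}$-module; consequently $\delta^{[w]}L(\blambda)$ has the same $A^{\otimes w}$-content as $L(\blambda)$, so $\delta^{[w]}$ can only permute the $L(\blambda)$ within a fixed ``colour type'' $\boldsymbol{\omega}=(\omega_1,\dots,\omega_n)$. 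To pin down that each $L(\blambda)$ is fixed, recall (using $w!\in F^\times$, so the $F\sn_{\omega_i}$ are semisimple) the Clifford-theoretic description $L(\blambda)\isom\mathrm{Ind}_{B}^{A^{[w]}}\bigotimes_{i\in I}\big(L(i)^{\otimes\omega_i}\otimes D^{\lambda^{(i)}}\big)$, where $B=\bigotimes_i A^{[\omega_i]}$ is the parabolic subalgebra attached to the Young subgroup $\sn_{\omega_1}\times\cdots\times\sn_{\omega_n}$ and $D^{\lambda^{(i)}}$ is the simple $\sn_{\omega_i}$-module labelled by $\lambda^{(i)}$.

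Since $\iota^{[w]}$ preserves $B$ and restricts there to $\bigotimes_i\iota^{[\omega_i]}$, and since $A^{[w]}$ is a free Frobenius extension of $B$ (so that induction and coinduction from $B$ agree), $\delta^{[w]}$ intertwines induction with $\bigotimes_i\delta^{[\omega_i]}$ on $B$. The single-colour computation $\delta^{[\omega_i]}\big(L(i)^{\otimes\omega_i}\otimes D^{\lambda^{(i)}}\big)\isom(\delta L(i))^{\otimes\omega_i}\otimes(D^{\lambda^{(i)}})^{*}$ then reduces everything to two self-duality inputs: $\delta L(i)\isom L(i)$ (as $A$ is a BGG algebra) and $(D^{\lambda^{(i)}})^{*}\isom D^{\lambda^{(i)}}$, the latter because every permutation is conjugate to its inverse, so every simple $F\sn_{\omega_i}$-module is self-dual under the contragredient duality induced by $\sigma\mapsto\sigma^{-1}$. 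Feeding these back through induction yields $\delta^{[w]}L(\blambda)\isom L(\blambda)$, completing the verification. The delicate points, where I would spend the most care, are the compatibility of $\delta^{[w]}$ with induction (the identification of induction and coinduction from the parabolic) and the single-colour duality computation, since these are where the interaction between the twist $\iota$ on $A$ and the inversion on $\sn_w$ must be tracked precisely.
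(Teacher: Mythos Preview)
Your proposal is correct and follows essentially the same strategy as the paper: construct the anti-automorphism $\iota^{[w]}$ by combining $\iota^{\otimes w}$ with inversion on $\sn_w$, verify directly that it is an anti-automorphism whose square $\alpha^{\otimes w}\otimes 1_{\sn_w}$ is inner, and then check that the induced duality fixes simples. The paper's proof coincides with yours on all of this except the final step, where it simply invokes \cite[Coro~3.9, Remark~(3)]{CT} for the self-duality of the $L(\blambda)$, whereas you unpack a Clifford-theoretic argument reducing to $\delta L(i)\isom L(i)$ and the self-duality of the simple $F\sn_{\omega_i}$-modules $D^{\lambda^{(i)}}$. Your route is more self-contained and makes explicit the two inputs that matter (compatibility of $\delta^{[w]}$ with parabolic induction via the Frobenius-extension property, and self-duality of symmetric-group irreducibles since every permutation is conjugate to its inverse), but the two arguments are morally identical---the cited remark in \cite{CT} is precisely this computation.
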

\begin{proof}
The fact that $A^{[w]}$ is quasi-hereditary is the result of \cite[Section 6]{CT}.  For proving BGG duality on  $A^{[w]}$, similarly to Lemma \ref{lem-dualOnTensor}, suppose $\iota$ is the anti-automorphism of $A$ corresponding to the BGG duality on $A$, and $\iota_w$ the natural involutary anti-automorphism $\iota_w$ on the group algebra structure of $F\sn_w$, which sends $\sigma\in \sn_w$ to $\sigma^{-1}$.  These anti-automorphisms are compatible with the action of permuting the components of a tensor space, so they combine to give the anti-automorphism $\iota^{[w]}$ on $A^{[w]}$, which is given by
\begin{eqnarray*}
\iota^{[w]}(a_1\otimes \cdots \otimes a_w\otimes\sigma) = \iota(a_{\iota_w(\sigma)(1)})\otimes\cdots \otimes \iota(a_{\iota_w(\sigma)(n)}) \otimes \iota_w(\sigma).
\end{eqnarray*}
Indeed,
\begin{eqnarray*}
& & \iota^{[w]}( (a_1\otimes \cdots \otimes a_w\otimes \sigma)(b_1\otimes\cdots \otimes b_w\otimes \tau)) \\
&=& \iota^{[w]}( a_1b_{\sigma^{-1}(1)}\otimes \cdots\otimes a_wb_{\sigma^{-1}(w)}\otimes\sigma\tau) \\
&=& \iota(a_{\tau\sigma(1)}b_{\tau(1)})\otimes\cdots\otimes \iota(a_{\tau\sigma(w)}b_{\tau(w)})\otimes\tau^{-1}\sigma^{-1}\\
&=& \iota(b_{\tau(1)})\iota(a_{\tau\sigma(1)})\otimes\cdots\otimes \iota(b_{\tau(w)})\iota(a_{\tau\sigma(w)})\otimes\tau^{-1}\sigma^{-1}\\
&=& (\iota(b_{\tau(1)})\otimes\cdots\otimes \iota(b_{\tau(w)})\otimes \tau^{-1})(\iota(a_{\sigma(1)})\otimes\cdots \otimes\iota(a_{\sigma(1)})\otimes\sigma^{-1})\\
&=& \iota^{[w]}(b_1\otimes\cdots\otimes \tau)\iota^{[w]}(a_1\otimes\cdots\otimes \sigma)
\end{eqnarray*}
Note that as $\iota^2=\alpha$ is an inner automorphism of $A$, so $(\iota^{[w]})^2=\alpha^{\otimes w}\otimes 1_{\sn_w}$ is also an inner automorphism of $A^{[w]}$. Let the duality functor of $A$ be $\delta$ and the corresponding functor on $A^{[w]}$ be $\delta^{[w]}$.  Now the duality induced by $\iota^{[w]}$ maps any simple $A^{[w]}$-modules to its linear dual (c.f. Lemma \ref{lem-balOnTensor}), then the fact that $\delta^{[w]}$ fixes simples follows from Remark (3) of \cite[Coro 3.9]{CT}.
\end{proof}
\begin{remark}
Recall that the cell datum of a cellular algebra consists of an involutary anti-automorphism.  As cellular algebras have such close resemblance of BGG algebras, one may expect the same result to hold for cellular algebras as well.  A special case is already known, see \cite{GG}.
\end{remark}

\begin{prop}\label{prop-balOnWr}
If $A$ is a standard Koszul (resp. balanced) algebra, then so is $A^{[w]}$.
\end{prop}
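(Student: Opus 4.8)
The plan is to run the wreath-product analogue of the tensor-product argument of Lemma \ref{lem-balOnTensor}, replacing the total complex of a tensor product by the functor $(-)^{[w]}$ and controlling homology through Lemma \ref{lem-wrHomology}. Write $\std = \bigoplus_{i\in I}\std(i)$ and let $\projresstd^\bullet$ be its minimal graded linear projective resolution. First I would record that wreathing sends projectives to projectives: since $P^{[w]} = A^{[w]}\otimes_{A^{\otimes w}}P^{\otimes w}$ (as already used in the proof of Proposition \ref{prop-wrExtCommute}) and induction along the free extension $A^{\otimes w}\hookrightarrow A^{[w]}$ preserves projectivity, every term of $\projresstd^{[w]\bullet}$ is a projective $A^{[w]}$-module. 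By Lemma \ref{lem-wrHomology}, $H(\projresstd^{[w]\bullet})\isom H(\projresstd^\bullet)^{[w]}=\std^{[w]}$ is concentrated in homological degree $0$, so $\projresstd^{[w]\bullet}$ is a projective resolution of $\std^{[w]}\isom\bigoplus_{\blambda\in\Lambda_w^I}\std(\blambda)^{\oplus m(\blambda)}$.

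Next I would check that wreathing preserves linearity and minimality. For linearity, the homological-degree-$k$ term of $\projresstd^{[w]\bullet}$ is assembled from tensor factors $P(i_1)\langle k_1\rangle\otimes\cdots\otimes P(i_w)\langle k_w\rangle$ with $k_1+\cdots+k_w=k$; as grading shifts add under $\otimes$, this carries the shift $\langle k\rangle$, and tensoring with $F\sn_w$ (placed in degree $0$) followed by the wreath decomposition rewrites it as a sum of modules $P(\bmu)\langle k\rangle$. For minimality I would use that $w!$ is invertible, so $F\sn_w$ is semisimple and $A^{[w]}=A^{\otimes w}\rtimes\sn_w$ satisfies $\Rad(A^{[w]})=\Rad(A^{\otimes w})\otimes F\sn_w$; since the original differential lands in the graded radical, the wreathed differential $\sum_{a+b=w-1}1^{\otimes a}\otimes d\otimes 1^{\otimes b}$ lands in $\Rad(A^{\otimes w})$ times each term, hence in the graded radical of each term of $\projresstd^{[w]\bullet}$. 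Thus $\projresstd^{[w]\bullet}$ is the minimal resolution of $\std^{[w]}$, and by uniqueness it decomposes as the direct sum of the minimal resolutions of the $\std(\blambda)$; since the whole complex is linear and linearity passes to direct summands, each $\std(\blambda)$ has a linear minimal projective resolution. The costandard statement follows from the dual argument, noting that wreathing preserves injectives by the same free-extension/coinduction reasoning ($Q^{[w]}=A^{[w]}\otimes_{A^{\otimes w}}Q^{\otimes w}$ being injective). This proves $A^{[w]}$ is standard Koszul.

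For balancedness I would run the identical scheme on the minimal linear tilting coresolution $\tiltres_\std^\bullet$ of $\std$ and the minimal linear tilting resolution $\tiltres_\costd^\bullet$ of $\costd$. Here I would use that wreathing a tilting module is tilting, since $T^{[w]}$ decomposes into the structural tilting modules $T(\blambda)$ by the earlier lemma, so the wreathed complexes consist of tilting $A^{[w]}$-modules; Lemma \ref{lem-wrHomology} again identifies their homology with $\std^{[w]}$ (resp. $\costd^{[w]}$) in degree $0$, and the grading-shift computation shows that the homological-degree-$k$ term has summands $T(\bmu)\langle k\rangle$, i.e. the wreathed complexes are linear.

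The main obstacle I anticipate is the minimality bookkeeping, and in particular transporting it for the tilting (co)resolutions, where ``minimal'' means the differentials are radical morphisms of the additive category $\add(T^{[w]})$ rather than maps landing in the Jacobson radical of a module. The cleanest route is to argue that a linear complex of objects of $\add(T^{[w]})$ is automatically minimal: because $A$ is balanced its Ringel dual is Koszul, so after the shift normalisation $\Hom_{A^{[w]}}$ between the structural tilting modules is non-negatively graded with semisimple degree-zero part, whence a degree-$0$ map $T(\bmu)\langle k\rangle\to T(\bmu')\langle k+1\rangle$ is multiplication by a positive-degree element and is therefore non-invertible; no component of the differential can be a split mono or epi, giving minimality. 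Granting this, uniqueness of minimal tilting (co)resolutions extracts a linear minimal tilting (co)resolution of each $\std(\blambda)$ and $\costd(\blambda)$ as a direct summand, completing the proof that $A^{[w]}$ is balanced.
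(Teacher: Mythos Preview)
Your argument is correct and is precisely the ``direct'' route the paper itself sketches in the Remark following its own proof: wreathe the linear projective/injective/tilting (co)resolutions of $\std$ and $\costd$, use Lemma~\ref{lem-wrHomology} and the grading arithmetic to see that the result is again linear with the correct homology, and then split off the (co)resolution of each $X(\blambda)$ as a direct summand.

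The paper's main proof takes a different, more indirect path. Rather than wreathing $\projresstd^\bullet$, it wreathes the linear projective resolution of $A_0=\bigoplus_{i} L(i)$, so what the first paragraph establishes is that $A^{[w]}$ is \emph{Koszul}. For balancedness it never touches a tilting (co)resolution: instead it invokes Proposition~\ref{prop-wrExtCommute} to identify the Ringel dual of $A^{[w]}$ with the basic algebra of $(A^T)^{[w]}$, reruns the Koszulity argument on that, and then appeals to the fact that a quasi-hereditary algebra which is Koszul and has Koszul Ringel dual is already balanced. Your approach has the virtue of verifying the definitions head-on---in particular you obtain standard Koszulity of $A^{[w]}$ directly rather than only ordinary Koszulity---at the price of the minimality bookkeeping you flag; the paper's approach trades that bookkeeping for reliance on Proposition~\ref{prop-wrExtCommute} and on the Ringel-dual characterisation of balancedness. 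Your observation that linearity already forces minimality (distinct grading shifts on the indecomposable summands in consecutive homological degrees preclude any component of the differential from being an isomorphism) is exactly what makes the direct route go through cleanly.
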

\begin{proof}
For convenience, we prove this by assuming $A$ is basic, there is no loss in generality since all our stated facts respect Morita equivalence.  As $A= \bigoplus_{n\geq 0} A_n$ is Koszul,  $A_0$ admits a linear projective resolution $K^\bullet$.  Note that $A_0 \isom \bigoplus_{i\in I} L(i)$, and we have $A_0^{[w]} \isom \bigoplus_{\blambda \in \Lambda_w^I} L(\blambda)^{\oplus m(\blambda)}$.  From Lemma \ref{lem-wrHomology}, we know that $K^{[w]\bullet}$ is a linear projective complex whose homology is $A_0^{[w]}$ concentrated in (homological) degree 0, hence $K^{[w]\bullet}$ is a linear projective resolution of $A_0^{[w]}$.

If $A$ is balanced, then $A$ and its Ringel dual $A^T$ are Koszul.  We already have $A^{[w]}$ Koszul.  By (a Morita equivalent version of) Proposition \ref{prop-wrExtCommute}, the Ringel dual of $A^{[w]}$ is isomorphic to the basic algebra of $(A^T)^{[w]}$.  Since $A^T$ is Koszul, $(A^T)^{[w]}$ is also Koszul, and hence the Ringel dual of $A^{[w]}$ is Koszul.  This is sufficient for $A$ to be balanced.
\end{proof}
\begin{remark}
Alternatively, we can look at (co)resolutions of standard and costandard modules, i.e. working with the definition of standard Koszulity and balancedness directly.  Using the fact that wreathing preserves linearity, Lemma \ref{lem-wrHomology}, and some other folklore which we have not shown, (namely wreathing the direct sum of the structural (co)resolutions is the direct sum of the corresponding structural (co)resolutions), one can then show standard Koszulity and balancedness for the wreath product algebra.
\end{remark}

We finish by showing that wreathing also inherits condition (H).
\begin{theorem}
$A$ be a standard Koszul BGG algebra satisfying condition (H).  Then $A^{[w]}$ is also a standard Koszul BGG algebra satisfying condition (H).
\end{theorem}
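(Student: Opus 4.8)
The plan is to treat the three assertions—that $A^{[w]}$ is BGG, that it is standard Koszul, and that it satisfies condition (H)—separately, the first two being immediate consequences of the preceding results and only the last requiring new input. Indeed, Lemma~\ref{lem-dualOnWr} already shows $A^{[w]}$ is a BGG algebra, and Proposition~\ref{prop-balOnWr} shows it is standard Koszul; crucially, the grading produced there is the one \emph{induced by wreathing}, so that $F\sn_w$ sits in degree zero and the degree of any graded object of $\gr{A^{[w]}}$ is inherited from its underlying $A^{\otimes w}$-structure. It remains to exhibit a function witnessing condition (H). Mimicking the tensor-product case, I would let $h^{[w]}$ depend only on the content of a multipartition: for $\blambda=(\lambda^{(1)},\ldots,\lambda^{(n)})$ with $\lambda^{(i)}\vdash\omega_i$, set
\[
h^{[w]}(\blambda)\;=\;\sum_{i\in I}\omega_i\,h(i)\;=\;\sum_{i\in I}|\lambda^{(i)}|\,h(i),
\]
taking values in $\{0,1,\ldots,wn\}$. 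This value is insensitive to the individual partitions $\lambda^{(i)}$, in keeping with the fact that the symmetric-group (Specht) data lives in degree zero.

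The core of the proof is to reduce condition (H) for $A^{[w]}$ to condition (H) for $A$ by restricting along the inclusion $A^{\otimes w}\subseteq A^{[w]}$. This restriction is exact and degree-preserving, so any graded composition factor $L(\bmu)\langle l\rangle$ of $\std(\blambda)$ restricts, in the same degree $l$, to composition factors of $\std(\blambda)|_{A^{\otimes w}}$. On the one hand, $\std(\blambda)|_{A^{\otimes w}}$ is a sum of $\sn_w$-conjugates of a tensor product $\std(x_1)\otimes\cdots\otimes\std(x_w)$, where the labels $x_k\in I$ run through the content of $\blambda$ (each $i$ occurring $\omega_i$ times); a graded composition factor of such a tensor product has the form $L(y_1)\otimes\cdots\otimes L(y_w)$ occurring in degree $l=l_1+\cdots+l_w$, with $L(y_k)$ sitting in degree $l_k$ of $\std(x_k)$. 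On the other hand, since $w!$ is invertible, $L(\bmu)$ restricts to a sum of conjugates of a single simple tensor whose content is exactly $\nu_i=|\mu^{(i)}|$, so the multiset $\{y_1,\ldots,y_w\}$ contains $i$ with multiplicity $\nu_i$. Consequently $h^{[w]}(\bmu)=\sum_k h(y_k)$ and $h^{[w]}(\blambda)=\sum_k h(x_k)$. Applying condition (H) for $A$ in each tensor slot gives $h(y_k)=h(x_k)-l_k$, and summing over $k$ telescopes to
\[
h^{[w]}(\bmu)\;=\;\sum_k h(y_k)\;=\;\sum_k h(x_k)-\sum_k l_k\;=\;h^{[w]}(\blambda)-l,
\]
which is precisely condition (H) for $A^{[w]}$.

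The step I expect to be the main obstacle is the bookkeeping that underlies this reduction: one must verify carefully that $\std(\blambda)|_{A^{\otimes w}}$ and $L(\bmu)|_{A^{\otimes w}}$ decompose as described and that the Koszul grading is inherited slot-by-slot with $F\sn_w$ in degree zero—both of which rest on the structure theory of \cite{CT} together with the semisimplicity of $F\sn_w$. A point worth stressing is that the content is generally \emph{not} preserved from $\blambda$ to $\bmu$, because each $\std(i)$ has composition factors $L(j)$ for various $j\neq i$; thus the argument cannot appeal to content matching but must rely on the telescoping of $h$-values through condition (H) on $A$. Once the decomposition and grading statements are in place, the remaining computation is the routine summation carried out above.
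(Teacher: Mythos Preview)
Your proposal is correct, and the function $h^{[w]}$ you define coincides with the paper's (the paper appends a harmless shift $-(w-1)$ so that the minimum value is $0$, and explicitly remarks that this shift is unnecessary for the argument). The overall strategy is the same---reduce condition (H) for $A^{[w]}$ to condition (H) for $A$ slot by slot---but the mechanism differs. The paper invokes \cite[Prop~4.4]{CT}, a combinatorial criterion characterising when $[\std(\blambda):L(\bmu)\langle l\rangle]_q\neq 0$ in terms of an auxiliary multipartition $(\rho^{(i,s)})$ indexed by a refinement of the radical filtration of each $\std(i)$; the verification of $h(\blambda)-h(\bmu)=l$ then becomes an explicit manipulation of three numerical identities satisfied by the $|\rho^{(i,s)}|$. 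You instead restrict along $A^{\otimes w}\hookrightarrow A^{[w]}$, use the Mackey-type decompositions of $\std(\blambda)$ and $L(\bmu)$ (which follow from the induction description in \cite{CT} together with semisimplicity of $F\sn_w$), and read off the slot contributions directly. Your route is more conceptual and avoids unpacking \cite[Prop~4.4]{CT}; the paper's route is more explicit and makes the dependence on the radical layers of the $\std(i)$ visible. Both ultimately rest on the same structure theory from \cite{CT}, and indeed \cite[Prop~4.4]{CT} is essentially the combinatorial shadow of your restriction argument.
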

\begin{proof}
By Lemma \ref{lem-dualOnWr} and Proposition \ref{prop-balOnWr}, it is sufficient to show that condition (H) is satisfied for $A^{[w]}$.  Let $h_A$ be the function so that $A$ satisfies condition (H).  We prove that condition (H) holds in $A^{[w]}$ using the following function:
\begin{eqnarray*}
h: \Lambda_w^I & \to & \{0,1,\ldots,n\}\\
 (x_1,\ldots,x_w) &\mapsto & \left(\sum_{k=1}^w h_A(x_k)|\lambda^{(k)}|\right) - (w-1)
\end{eqnarray*}

Suppose $[\std(\blambda):L(\bmu)\langle l\rangle]_q\neq 0$.  From \cite[Prop 4.4]{CT}, there exist $(\rho^{(i,s)})_{(i,s)\in K} \in \Lambda_w^K$ such that
\begin{eqnarray}
\sum_{s=0}^{m_i} |\rho^{(i,s)}| &=& |\lambda^{(i)}|\quad \forall i\in I, \label{cond1} \\
\sum_{(j,s)\in K_i} |\rho^{(j,s)}| &=& |\mu^{(i)}|\quad \forall i\in I, \label{cond2} \\
\sum_{(i,s)\in K} l_{i,s} |\rho^{(i,s)}| &=& l \label{cond3}
\end{eqnarray}
where
\begin{eqnarray*}
K &=& \{(i,s)\in I\times \integer \;|\;\; 0\leq s\leq m_i\},\\
K_i &=& \{(j,s)\in K \;|\;\; \std(j,s)/\std(j,s+1) \isom L(i)\},\\
\mbox{and for all }i\in I,\;\; \std(i) &=& \std(i,0) \supset \std(i,1)\supset \cdots \supset \std(i,m_i+1) = 0
\end{eqnarray*}
is a refinement of the radical filtration of $\std(i)$ such that each subquotient is a single simple $A$-module; and the $l_{i,s}$-th radical layer of $\std(i)$ contains $\std(i,s)/\std(i,s+1)$.

We can see that $K=\coprod_{i\in I} K_i$.  Indeed, by definition $K$ is the union of $K_i$, and every $(j,s)$ lies in $K_i$ for precisely one $i$ as $\std(j,s)/\std(j,s+1)$ is a single simple module by definition.  To prove that condition (H) holds, it suffices to show that $h(\blambda)-h(\bmu)=l$.  Expanding the left hand side using the definition and conditions (\ref{cond1}) and (\ref{cond2}), we get
\begin{eqnarray*}
h(\blambda)-h(\bmu) &=& \sum_{i\in I} h_A(i) |\lambda^{(i)}|-\sum_{i\in I} h_A(i) |\mu^{(i)}|\\
&=& \sum_{i\in I} h_A(i) \left( \sum_{s=0}^{m_i} |\rho^{(i,s)}|\right) - \sum_{i\in I} h_A(i) \left( \sum_{(j,s)\in K_i} |\rho^{(j,s)}|\right).
\end{eqnarray*}
Since $(j,s)\in K_i$ implies $\std(j,s)/\std(j,s+1)\isom L(i)$ and $L(i)$ is in the $l_{j,s}$-th radical layer of $\std(j)$, i.e. $[\std(j),L(i)\langle l_{j,s}\rangle]_q\neq 0$, by condition (H) on $A$, we get $h_A(i)=h_A(j)-l_{j,s}$.  We substitute this back into our expansion and get:
\begin{eqnarray*}
h(\blambda)-h(\bmu) &=& \sum_{i\in I}\sum_{s=0}^{m_i} h_A(i)|\rho^{(i,s)}| - \sum_{i\in I} \sum_{(j,s)\in K_i} (h_A(j)-l_{j,s}) |\rho^{(j,s)}|\\
&=& \sum_{(i,s)\in K} h_A(i)|\rho^{(i,s)}| - \sum_{(i,s)\in K}\left(h_A(i)|\rho^{(i,s)}| - l_{i,s}|\rho^{(i,s)}|\right)\\
&=& \sum_{(i,s)\in K} l_{i,s}|\rho^{(i,s)}|\\
&=& l \; ,
\end{eqnarray*}
where the second equality here uses $K=\coprod_{i\in I}K_i$ and the third one uses condition (\ref{cond3}).  This completes the proof.
\end{proof}
\begin{remark}
Note that the ending term $-(w-1)$ in the definition of $h$ is not necessary to prove that condition (H) holds; but it is convenient in practice to include such a ``shift", because if $\min_{i\in I}\{ h_A(i)\}=0$, then $\min_{\blambda\in \Lambda_w^I} h(\blambda) = 0$.
\end{remark}

\textsc{Acknowledgements}  I thank Will Turner for correction on Proof of Proposition \ref{prop-wrExtCommute}; I also thank the referee, Abbie Hall, and Amy Pang for general comments and grammatical corrections.

\phantomsection  
\addcontentsline{toc}{section}{References}


\begin{thebibliography}{20}
\bibitem[ADL]{ADL}
I.~{\'A}goston, V.~Dlab, and E.~Luk{\'a}cs.
\newblock Quasi-hereditary extension algebras.
\newblock {\em Algebr. Represent. Theory}, 6(1):97--117, 2003.

\bibitem[BGS]{BGS}
A.~Beilinson, V.~Ginzburg, and W.~Soergel.
\newblock Koszul duality patterns in representation theory.
\newblock {\em J. Amer. Math. Soc.}, 9(2):473--527, 1996.

\bibitem[BO]{BO}
P.A. Bergh and S.~Oppermann.
\newblock Cohomology of twisted tensor products.
\newblock {\em J. Algebra}, 320(8):3327--3338, 2008.

\bibitem[CLS]{CLS}
S.~Cautis, A.~Licata, and J.~Sussan.
\newblock Braid group actions via categorified {H}eisenberg complexes.
\newblock 2012, \href{http://arxiv.org/abs/1207.5245}{\ttfamily
  arXiv:1207.5245}.

\bibitem[CPS]{CPS}
E.~Cline, B.~Parshall, and L.~Scott.
\newblock Duality in highest weight categories.
\newblock In {\em Classical groups and related topics ({B}eijing, 1987)},
  volume~82 of {\em Contemp. Math.}, pages 7--22. Amer. Math. Soc., Providence,
  RI, 1989.

\bibitem[CT1]{CT2}
J.~Chuang and K.M. Tan.
\newblock Filtrations in {R}ouquier blocks of symmetric groups and {S}chur
  algebras.
\newblock {\em Proc. London Math. Soc.}, 86(3):685--706, 2003.

\bibitem[CT2]{CT}
J.~Chuang and K.M. Tan.
\newblock Representations of wreath products of algebras.
\newblock {\em Math. Proc. Cambridge Philos. Soc.}, 135(3):395--411, 2003.

\bibitem[DM]{DM}
Y.~Drozd and V.~Mazorchuk.
\newblock Koszul duality for extension algebras of standard modules.
\newblock {\em J. Pure Appl. Algebra}, 211(2):484--496, 2007.

\bibitem[GG]{GG}
T.~Geetha and F.M. Goodman.
\newblock Cellularity of wreath product algebras and {A}--brauer algebras.
\newblock 2012, \href{http://arxiv.org/abs/1208.2983}{\ttfamily
  arXiv:1208.2983}.

\bibitem[GRS]{GRS}
E.~Green, I.~Reiten, and {\O}.~Solberg.
\newblock Dualities on generalized {K}oszul algebras.
\newblock {\em Mem. Amer. Math. Soc.}, 159(754):xvi+67, 2002.

\bibitem[Irv]{Irv}
R.~S. Irving.
\newblock B{GG} algebras and the {BGG} reciprocity principle.
\newblock {\em J. Algebra}, 135(2):363--380, 1990.

\bibitem[Mad1]{Mad1}
D.~Madsen.
\newblock On a common generalization of {K}oszul duality and tilting
  equivalence.
\newblock {\em Adv. Math.}, 227(6):2327--2348, 2011.

\bibitem[Mad2]{Mad2}
D.~Madsen.
\newblock Quasi-hereditary algebras and generalized {K}oszul duality.
\newblock 2012, \href{http://arxiv.org/abs/1201.0441}{\ttfamily
  arXiv:1201.0441}.

\bibitem[Maz]{Maz2}
V.~Mazorchuk.
\newblock Koszul duality for stratified algebras. {I}. {B}alanced
  quasi-hereditary algebras.
\newblock {\em Manuscripta Math.}, 131(1-2):1--10, 2010.

\bibitem[MT]{MT}
V.~Miemietz and W.~Turner.
\newblock The {W}eyl extension algebra of {$GL_2(\bar{\mathbb{F}}_p)$}.
\newblock 2011, \href{http://arxiv.org/abs/1106.5665}{\ttfamily
  arXiv:1106.5665}.

\end{thebibliography}
\small{
Address:  Institute of Mathematics, Fraser Noble Building, University of Aberdeen, Aberdeen, United Kingdom, AB24 3UE\\
E-mail:  \texttt{r01akyc@abdn.ac.uk}}

\end{document}